\UseRawInputEncoding
% * preamble
\documentclass{amsart}
\usepackage[all,cmtip]{xy}
\usepackage{amsthm}
\usepackage{microtype}
\usepackage{amssymb}
\usepackage{xypic}
\usepackage{graphicx}
\usepackage{amscd}
\usepackage{setspace}
\usepackage{xcolor}
\usepackage{mathtools}
\usepackage[colorlinks=false, linkcolor=red]{hyperref}

\DeclareMathOperator{\Spec}{Spec}
\DeclareMathOperator{\Spf}{Spf}
\DeclareMathOperator{\Spa}{Spa}
\DeclareMathOperator{\Sp}{Sp}
\DeclareMathOperator{\Hom}{Hom}
\DeclareMathOperator{\Pic}{Pic}
\DeclareMathOperator{\Frac}{Frac}
\DeclareMathOperator{\Fil}{Fil}
\DeclareMathOperator{\Gr}{Gr}
\DeclareMathOperator{\adic}{adic}

\newtheorem{theorem}{Theorem}[section]
\newtheorem{thm}[theorem]{Theorem}

\newtheorem{lemm}[theorem]{Lemma}
\newtheorem{prop}[theorem]{Proposition}

\theoremstyle{definition}
\newtheorem{defi}[theorem]{Definition}

\newtheorem{example}[theorem]{Example}

\theoremstyle{remark}
\newtheorem{remark}[theorem]{Remark}
\newtheorem{question}[theorem]{Question}

\numberwithin{equation}{section}

\title[]{On rigid varieties with projective reduction}

\author{Shizhang Li}
\address{Department of Mathematics, Columbia University}
\email{shanbei@math.columbia.edu}

\keywords{rigid geometry, rigid Picard variety, formal model}

\subjclass[2010]{14G22}

\begin{document}
\maketitle

\begin{abstract}
  In this paper, we study smooth proper rigid varieties which admit formal
  models whose special fibers are projective. The main theorem asserts that the
  identity components of the associated rigid Picard varieties will
  automatically be proper. Consequently, we prove that p-adic Hopf varieties
  will never have a projective reduction. The proof of our main theorem uses the
  theory of moduli of semistable coherent sheaves.
\end{abstract}

\tableofcontents

% * Introduction
\section{Introduction}

In the famous series~\cite{BL1},~\cite{BL2},~\cite{BLR3} and~\cite{BLR4}, Bosch,
L\"{u}tkebohmert and Raynaud laid down the foundations relating formal and rigid
geometry. The type of questions they treat in the series are mostly concerned
with going from the rigid side to formal side. In this paper we will consider
the opposite type of question, namely we will investigate to what extent
properties on the formal side inform us about rigid geometry. More precisely, we
will see what geometric consequences one can deduce under the assumption that
the rigid space has a projective reduction.

Let \(K\) be a non-archimedean field with residue field \(k\). Let \(X\) over
\(K\) be a connected smooth proper rigid space with a \(K\)-rational point \(x:
\Sp(K) \to X\).

In this paper we prove the following:

\begin{thm}
\label{proper picard}
Suppose that \(X\) has a formal model \(\mathcal{X}\) whose special fiber
\(\mathcal{X}_0\) is projective over \(\Spec(k)\), assume furthermore that
Picard functor is represented by a quasi-separated rigid space. Then its
identity component \(\Pic^0_X\) is proper.
\end{thm}

Note that if \(K\) is a \(p\)-adic field, then the properness of \(\Pic^0_X\)
implies Hodge symmetry of degree \(1\): \(h^{1,0}(X) = h^{0,1}(X)\)
(c.f.~\cite[Theorem 1.2]{HL17}).

Let us make a historical remark on the representability of Picard functor in
rigid geometry.

\begin{remark}
  \leavevmode
  \begin{enumerate}
  \item When \(K\) is discretely valued, Hartl and L\"{u}tkebohmert proved the
    representability of the Picard functor on the category of smooth rigid
    spaces over \(K\) under an additional assumption that \(X\) has a strict
    semistable formal model (c.f.~\cite{HL}). They prove a structure theorem for
    the Picard space. In particular, the it is quasi-separated.
  \item In Warner's thesis it is proved that assuming \(K\) has characteristic
    \(0\), the Picard functor defined on a suitable category of adic spaces over
    \(K\) is represented by a separated rigid space over \(\Spa(K,\mathcal{O})\)
    (c.f.~\cite[Theorem 1.0.2]{Warner}).
  \item In general, we expect the Picard functor of a proper smooth rigid space
    over \(K\) to be represented by a separated rigid space.
  \end{enumerate}
\end{remark}

If \(X\) has a projective reduction then one can naturally define an open and
closed sub-functor \(\underline{\Pic}^{P}_{X/K}\) of \(\underline{\Pic}_{X/K}\),
see Definition~\ref{pictau}.

\begin{thm}[Main Theorem]
\label{Main Theorem}
Suppose that \(X\) has a formal model \(\mathcal{X}\) whose special fiber
\(\mathcal{X}_0\) is projective over \(\Spec(k)\). Then
\(\underline{\Pic}^{P}_{X/K}\) is a proper functor.
\end{thm}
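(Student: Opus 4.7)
The plan is to verify the existence part of the valuative criterion of properness for the rigid analytic group space $\Pic^0_X$. Granting Warner's representability and the standard fact that a rigidified Picard functor is separated, the task reduces to the following lifting problem: given a non-archimedean field extension $K'/K$, a (possibly higher-rank) valuation ring $R'\subset K'$ with $R'\cap K\subset\mathcal{O}$, and a point of $\Pic^0_X$ over $\Spa(K')$ represented by a rigidified line bundle $(\mathcal{L},\lambda)$ on $X_{K'}$, extend it to a $\Spa(K',R')$-point of $\Pic^0_X$.

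First I would translate the extension problem to the formal side. The formal model $\mathcal{X}$ base-changes to a formal model $\mathcal{X}_{R'}$ over $R'$ whose special fiber is the pullback of $\mathcal{X}_0$ and is therefore again projective. By the Bosch--L\"utkebohmert--Raynaud correspondence between rigid line bundles and line bundles on admissible formal blow-ups, extending the class on the generic fiber amounts to producing an invertible sheaf on some admissible blow-up of $\mathcal{X}_{R'}$ that restricts to $\mathcal{L}$.

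To produce a coherent extension, I would invoke Langton's extension theorem together with the theory of moduli of semistable coherent sheaves on the projective variety $\mathcal{X}_0$. Since $[\mathcal{L}]\in\Pic^0$, its Hilbert polynomial with respect to any chosen polarization of $\mathcal{X}_0$ coincides with that of $\mathcal{O}_{\mathcal{X}_0}$; the moduli space of semistable sheaves on $\mathcal{X}_0$ with this fixed Hilbert polynomial is a projective $k$-scheme, which supplies the needed boundedness. Langton's theorem then yields an $R'$-flat coherent extension $\tilde{\mathcal{L}}$ of $\mathcal{L}$ whose reduction is a rank-one torsion-free semistable sheaf on the special fiber.

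The principal obstacle is to upgrade $\tilde{\mathcal{L}}$ from a torsion-free rank-one sheaf to an honest invertible sheaf. Because $\mathcal{X}_0$ may be singular or non-locally-factorial, the reflexive hull of $\tilde{\mathcal{L}}$ need not be a line bundle. I expect to resolve this by iteratively blowing up $\mathcal{X}_{R'}$ along the ideal defining the non-invertibility locus of $\tilde{\mathcal{L}}$; such admissible blow-ups leave the rigid generic fiber unchanged, and the boundedness supplied by the projective moduli of semistable sheaves should force termination after finitely many steps. The rigidification $\lambda$ propagates canonically through each modification, pinning the extension down uniquely and thereby completing the verification of the valuative criterion.
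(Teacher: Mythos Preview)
Your proposal has a structural gap: in the rigid/adic setting the valuative criterion you are checking characterizes \emph{partial properness}, not properness. In Huber's framework one has proper $=$ partially proper $+$ quasi-compact, and the lifting statement you formulate (extending a $\Spa(K',R)$-point to a $\Spa(K',R')$-point) is exactly the valuative criterion for partial properness. The paper establishes this part in complete generality, with no hypothesis on the reduction, as a direct consequence of Kedlaya--Liu's theorem that coherent sheaves on $X^{\mathrm{ad}}\times\Spa(K',R)$ and on $X^{\mathrm{ad}}\times\Spa(K',R^+)$ coincide; see the paper's Theorem~\ref{partially proper}. So the step you spend the whole proposal on is automatic, and the projective-reduction hypothesis has not yet been used for anything essential. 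The Hopf surface discussed in the paper shows concretely why this cannot be enough: its $\Pic^0$ is $\mathbb{G}_m$, which satisfies your lifting criterion but is not proper because it fails to be quasi-compact.

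The actual content of the theorem is therefore the \emph{quasi-compactness} of $\Pic^0_X$, and this is what your outline does not address. The paper handles it not by a valuative criterion but by exhibiting a quasi-compact affinoid $W$ surjecting onto $\Pic^0_X$: one takes a smooth affine presentation of the (quasi-compact, by Langer) moduli stack of semistable sheaves on $\mathcal{X}_0$ with Hilbert polynomial $P(\mathcal{O}_{\mathcal{X}_0})$, lifts it through the thickenings $\mathcal{X}_{\pi^i}$ via Emerton's deformation lemma, and passes to the rigid generic fiber. The map $W\to\Pic^0_X$ is the determinant of the universal family, and surjectivity on points is exactly where the semistable-reduction theorem (your Langton step) is used. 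Note also that there is no need to upgrade the semistable formal model to a line bundle on a blow-up, a step whose termination you left unjustified: the determinant construction converts any $S$-flat perfect complex into an honest line bundle, so the possibly non-invertible reduction causes no trouble.
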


The notion of a functor being proper is defined in Section~\ref{Preliminaries on
  Picard functor}. By Theorem~\ref{partially proper}, it suffices to prove
\(\underline{\Pic}^{P}_{X/K}\) is bounded. We use moduli of semistable sheaves
to show this.

% If Picard functor is represented by \(\Pic_X\) and denote its identity component
% as \(\Pic^0_X\), then by definition it is obvious that every connected component
% of \(\Pic^{\tau}_X\) is isomorphic to \(\Pic^0_X\).

Let us summarize how this paper is organized. In Section~\ref{Specialization of
  K group} we make an observation that there is a well defined specialization
from the \(K\)-group of coherent sheaves on \(X\) to that of \(\mathcal{X}_0\).
Therefore a chosen ample invertible sheaf on \(\mathcal{X}_0\) will enable us to
associate Hilbert polynomials. As a byproduct, we find that non-archimedean Hopf
surfaces do not have any formal model with projective reduction, see
Proposition~\ref{Hopf reduction}.

From Section~\ref{Semistable Reduction} on, we fix a chosen ample invertible
sheaf on \(\mathcal{X}_0\). Then we define (semi)-stability of coherent sheaves
on \(X\) and generalize a result of Langton, namely we prove that a semistable
coherent sheaf \(F\) on \(X\) always has a formal model \(\mathcal{F}\) such
that its reduction \(\mathcal{F}_0\) is semistable, which justifies the title of
this section. Consequently, any line bundle on \(X\) has a formal model whose
reduction is semistable.

In the last section we construct an auxiliary quasi-compact space \(W\) which
``surjects'' onto \(\underline{\Pic}^{P}_X\); this completes the proof of our
main theorem.

% * Notations
\section{Notations}

Let \(K\) be a non-archimedean field with value group \(\Gamma \subset
\mathbb{R}\), ring of integers \(\mathcal{O}\), maximal ideal \(\mathfrak{m}\)
and residue field \(k\). Throughout this paper, \(\mathcal{X}\) will be a proper
admissible formal model of a smooth proper connected rigid space
\(X=\mathcal{X}^{\mathrm{rig}}\) over \(\mathcal{O}\). For simplicity we will
assume \(X\) has a \(K\)-rational point \(x: \Sp(K) \to X\). We use almost the
same notation as in~\cite{BL1} except that we use \({(\cdot)}^{\mathrm{rig}}\)
to denote the generic fiber of an admissible formal scheme. We use roman letters
to denote rigid objects and curly letters to denote formal objects. We also
denote the level \(\pi\) (resp.\ \(\gamma\)), namely modulo \(\pi\) (resp.\
modulo \(\pi_{\gamma}\) for some \(|\pi_{\gamma}|=\gamma\)), of a formal objects
by subscript \(\pi\) (resp.\ \(\gamma\)).

% * Preliminaries on Picard functor
\section{Preliminaries on Picard functor}
\label{Preliminaries on Picard functor}

In this section, we will have a general discussion of (Picard) functor in the
context of rigid spaces and adic spaces.

Following~\cite{Warner}, we consider functors defined on a suitable subcategory
of adic spaces over \(K\). Let us denote \((\mathbf{V}/K)\) the full subcategory
of adic spaces over \(\Spa(K,\mathcal{O})\), its objects are adic spaces locally
of finite type over \(\Spa(K',R') \to \Spa(K,\mathcal{O})\) where \(K'/K\) is a
non-archimedean field extension, \(R'\) gives a (not necessarily rank 1)
valuation on \(K'\) with \(R' \cap K = \mathcal{O}\). In the following we will
use underlined notation to denote a functor, and the representing space (if
exists) will be denoted without underline. Note that this is opposite to the
convention of Grothendieck but agrees with that in loc.~cit.

In this paper, we consider the Picard functor
\[
  \underline{\Pic}_{X/K} \colon (\mathbf{V}/K)^{op} \to \mathcal{S}ets
\]
defined by
\[
  \underline{\Pic}_{X/K}(V)=\left\{\parbox{16em}{$\mathcal{L}$ a line bundle on
      $X^\mathrm{ad} \times_{\Spa(K,\mathcal{O})} V$, $\lambda: \mathcal{O}_V
      \xrightarrow{\sim} {(x,id)}^*\mathcal{L}$ an
      isomorphism}\right\}/\mathrm{Isom}.
\]
Note that this is a special case of the Picard functor defined in~\cite{Warner}.
Let us make the following definition.

\begin{defi}
\label{definitions of functor}
  Let \(F \colon (\mathbf{V}/K)^{op} \to \mathcal{S}ets\) be a functor.
  \begin{enumerate}
  \item \(F\) is said to be \emph{partially proper} if any element in
    \(F(\Spa(\Frac(R),R))\) can be extended to a unique element in
    \(F(\Spa(\Frac(R),R^+))\). Here \(R\) is a rank 1 valuation ring over
    \(\mathcal{O}\) and \(R^+\) is a valuation subring of \(R\) with the same
    fraction field.
  \item \(F\) is said to be \emph{bounded} if there exists a quasi-compact rigid
    space \(W\) over \(K\) and an element \(\xi \in F(W)\) such that for any
    \(\eta \in F(\bar{K})\), there exists \(P \in W(\bar{K})\) with \(\eta \cong
    P^*(\xi)\).
    \item \(F\) is said to be \emph{proper} if it is partially proper
  and bounded.
  \end{enumerate}
\end{defi}

In the definition of boundedness of functor, it is equivalent if we ask the
space \(W\) to be an affinoid space. This definition is justified by
Proposition~\ref{justify} below.

\begin{prop}
\label{justify}
Let \(F \colon (\mathbf{V}/K)^{op} \to \mathcal{S}ets\) be a (partially) proper
functor. If \(F\) is represented by a quasi-separated rigid space \(Y\) over
\(\Sp(K)\). Then \(Y\) is (partially) proper.
\end{prop}

Huber has defined the notion of partial properness (c.f.~\cite[Definition
1.3.3]{Huber}) for analytic adic spaces. He showed a valuative criterion of
partial properness for maps between analytic adic spaces (c.f.~\cite[Corollary
1.3.9]{Huber}). Combined with the work in~\cite{formal-rigid}, Huber was able to
show that a map of rigid spaces over discretely valued field is (partially)
proper if and only if the map of corresponding analytic adic spaces is
(partially) proper (c.f.~\cite[Remark 1.3.19]{Huber}). Temkin generalized the
result of L\"{u}tkebohmert, c.f.~\cite[Theorem 4.1]{Temkin}. In particular, we
now know that a quasi-separated map of rigid spaces over an arbitrary
non-archimedean field is partially proper if and only if the map of
corresponding analytic adic spaces satisfies the valuative criterion
in~\cite[Lemma 1.3.10]{Huber}.

\begin{proof}
  By definition of \(F\) being partially proper, we see that \(Y\) satisfies the
  valuative. Since \(Y\) is quasi-separated, it is partially proper
  by~\cite[Corollary 1.3.9]{Huber}.

  If \(F\) is proper, then by the above we know that \(Y\) is partially proper.
  It suffices to show that \(F\) being bounded implies that \(Y\) is
  quasi-compact. This follows from the Lemma below.
\end{proof}

\begin{lemm}
  Let \(f: W \to Y\) be a morphism of rigid spaces. Suppose \(W\) is
  quasi-compact, \(Y\) is quasi-separated and \(f\) is surjective on classical
  Tate points. Then \(Y\) is also quasi-compact.
\end{lemm}

\begin{proof}
  Let \(\{U_i\} _{i \in I}\) be an affinoid admissible covering of \(Y\).
  Because \(W\) is quasi-compact and \(f\) is surjective we see that finitely
  many of the \(U_i\)'s cover \(Y\) set-theoretically. 
  It suffices to prove that for any affinoid admissible open \(V\) in \(Y\) the
intersection \(\{V ∩ U_i\}\) admits a refinement by a finite admissible covering. 
Now \(V\) is affinoid, so \(\{V ∩ U_i\}\) are quasi-compact, since Y is quasi-separated.
Therefore we see that \(Y\) is quasi-compact.
\end{proof}

It is worth noticing the following (counter)-example if we drop the assumption
of \(Y\) being quasi-separated.

\begin{example}
  Let \(K = \mathbb{C}_p\) and let \(W\) be the closed disc of radius \(1\).
  Choose an irrational number \(r\) with \(0 < r < 1\). Let \(Y\) be the gluing
  of infinitely many closed discs of radius \(1\), where we glue all the radius
  greater than \(r\) parts together and do the same to all the radius less than
  \(r\) parts. There is an obvious map from \(W\) to \(Y\) that is a surjection
  on classical Tate points. It is easy to see that \(Y\) is not quasi-compact.
\end{example}

It is a consequence of~\cite[Theorem 2.3.3]{K-L} that \(\underline{\Pic}_{X/K}\)
is always partially proper, see Theorem~\ref{partially proper} below.

\begin{thm}
\label{partially proper}
\(\underline{\Pic_X}_{X/K}\) is partially proper.
\end{thm}

\begin{proof}
  Let \(R\) and \(R^+\) be as in Definition~\ref{definitions of functor}.
  
  By~\cite[Theorem 2.3.3]{K-L} we see that the restriction functor from the
  category of coherent sheaves on \(X^{\adic} \times_{\Spa(K,\mathcal{O})}
  \Spa(\Frac(R),R^+)\) to the category of coherent sheaves on \(X^{\adic}
  \times_{\Spa(K,\mathcal{O})} \Spa(\Frac(R),R)\) is an equivalence and
  preserves finite locally free coherent sheaves. Indeed, Theorem 2.3.3 of 
  loc.~cit.~establishes the above equivalence of categories 
  except for the last statement about preserving finite locally free coherent sheaves.
  However, note that in affinoid setting, with the notation in loc.~cit., 
  a coherent sheaf \(\mathcal{F}\) is finite locally free is equivalent to
   the associated \(A\)-module \(M\) being finite projective. Since this condition only depends on \(A\) but not on \(A+\),
   we see that being finite locally free does not depend on the \(\mathcal{O}^+\) structure. 
   Therefore the equivalence of category above preserves finite locally free ones.
  
  Let \(\mathcal{L}_R\) on
  \(X^{\adic} \times_{\Spa(K,\mathcal{O})} \Spa(\Frac(R),R)\) be a rigidified
  line bundle. By the equivalence of categories above we see that \(\mathcal{L}_R\)
  extends uniquely as a rigidified line bundle \(\mathcal{L}_{R^+}\) on
  \(X^{\adic} \times_{\Spa(K,\mathcal{O})} \Spa(\Frac(R),R^+)\).

  The argument above shows that the Picard functor satisfies the valuative
  criterion, namely any map \[f: \Spa(\Frac(R),R) \to \underline{\Pic}_{X/K}\]
  always extends to a map \[f: \Spa(\Frac(R),R^+) \to \underline{\Pic}_{X/K}\]
  which is what we need to show.
\end{proof}

% * Specialization
\section{Specialization of \(K\) group}
\label{Specialization of K group}

Suppose we have a class in \(K_0(X)\) represented by a coherent sheaf \(F\).
Then we can find some formal model \(\mathcal{F}\) of \(F\), by which we mean an
\(\mathcal{O}\)-torsion free finitely presented \(\mathcal{O_X}\)-module with
generic fibre isomorphic to \(F\). After reduction we get a coherent sheaf
\(\mathcal{F}_0\) on \(\mathcal{X}_0\). Different formal models of \(F\) will
differ by an \(\mathcal{O}\)-torsion finitely presented sheaf on
\(\mathcal{X}\). Let us prove a lemma on the \(\mathcal{O}\)-module structure of
such sheaves which we believe is interesting on its own.

\begin{lemm}
\label{torsion structure}
  Let \(\mathcal{A}\) be a topologically finitely presented
  \(\mathcal{O}\)-algebra and let \(M\) be an \(\mathcal{O}\)-torsion finitely
  presented \(\mathcal{A}\)-module. Then as an \(\mathcal{O}\)-module, we have
  the following decomposition:
  \[M=\bigoplus_{i=1}^k{(\mathcal{O}/\pi_i)}^{\oplus \lambda_i}
  \]
  for some (finite or countable) cardinals \(\lambda_1,\ldots,\lambda_k\).
\end{lemm}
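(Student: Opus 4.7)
The plan is to prove the lemma in two steps: first reduce to the case where a single $\pi_0\in\mathfrak{m}$ annihilates $M$, and then decompose the resulting $\mathcal{O}/\pi_0$-module as claimed. For the first step, I would use that finite presentation of $M$ over $\mathcal{A}$ provides finitely many $\mathcal{A}$-generators $m_1,\ldots,m_n$ of $M$, and by the $\mathcal{O}$-torsion hypothesis each nonzero $m_j$ is killed by a nonzero non-unit $\pi_{0,j}\in\mathfrak{m}$. Since $\mathcal{A}$ is commutative, the $\mathcal{O}$- and $\mathcal{A}$-actions on $M$ commute, so $\pi_0:=\prod_j\pi_{0,j}\in\mathfrak{m}\setminus\{0\}$ annihilates all of $M$. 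Hence $M$ becomes a module over $\mathcal{O}/\pi_0$, and for every $m\in M$ the ideal $\mathrm{ann}_{\mathcal{O}}(m)$ contains $(\pi_0)$; because $\mathcal{O}$ is a valuation ring, these annihilators sit in a totally ordered chain.

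For the second step, the key claim is that only finitely many principal ideals $(\pi_1),\ldots,(\pi_k)$ arise as $\mathrm{ann}_{\mathcal{O}}(m)$. My approach is to fix a presentation $\mathcal{A}^r\xrightarrow{\phi}\mathcal{A}^s\twoheadrightarrow M$ and descend it to a Noetherian subring of $\mathcal{O}$. Writing $\mathcal{A}\cong\mathcal{O}\langle x_1,\ldots,x_n\rangle/(f_1,\ldots,f_m)$, modulo $\pi_0$ each entry of $\bar\phi$ and each $\bar f_i$ becomes a polynomial, since coefficients of absolute value below $|\pi_0|$ vanish mod $\pi_0$ and only finitely many coefficients of a convergent power series have absolute value $\geq|\pi_0|$. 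Lifting these finitely many coefficients from $\mathcal{O}/\pi_0$ back to $\mathcal{O}$ and adjoining them to the image of $\mathbb{Z}$ yields a Noetherian subring $\mathcal{O}_0\subseteq\mathcal{O}$ (containing $\pi_0$), a finitely presented $\mathcal{O}_0$-algebra $\mathcal{A}_0$, and a finitely presented $\mathcal{A}_0$-module $M_0$ satisfying $M\cong M_0\otimes_{\mathcal{O}_0}\mathcal{O}$. A Smith-normal-form type analysis of $M_0$ in this Noetherian setting (after localizing at a suitable prime of $\mathcal{O}_0$) then produces finitely many principal elementary divisors, which upon base change back to $\mathcal{O}$ give the $\pi_i$. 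Finally, I would extract the summand $(\mathcal{O}/\pi_i)^{\oplus\lambda_i}$ by Zorn's lemma: choose a maximal pure family of cyclic $(\mathcal{O}/\pi_i)$-submodules inside $M$; the total ordering of ideals in $\mathcal{O}$ forces each such pure cyclic submodule to split off as a direct summand.

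The main obstacle is Step~2, namely establishing principality and finiteness of the $\mathcal{O}$-annihilators, since $\mathcal{O}$ may be non-Noetherian and $M$ is not $\mathcal{O}$-finitely generated. The finite-presentation hypothesis over $\mathcal{A}$ is genuinely essential: for instance $M=\mathcal{O}/\mathfrak{m}$ with non-principal $\mathfrak{m}$ is finitely generated but not finitely presented over $\mathcal{O}=\mathcal{A}$, and admits no decomposition of the claimed form.
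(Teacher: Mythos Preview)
Your Step 1 is fine and matches the paper's opening reduction. The central claim of Step 2, however, is false as stated: it is \emph{not} true that only finitely many principal ideals of $\mathcal{O}$ occur as $\mathrm{ann}_{\mathcal{O}}(m)$ for $m\in M$. Already for $\mathcal{A}=\mathcal{O}$ and $M=\mathcal{O}/\pi_0$, any $a\in\mathcal{O}$ with $|\pi_0|<|a|\le 1$ gives a nonzero $\bar a\in M$ with $\mathrm{ann}_{\mathcal{O}}(\bar a)=(\pi_0 a^{-1})$, so when $\Gamma$ is non-discrete one gets infinitely many distinct annihilator ideals even though the decomposition is simply $M=(\mathcal{O}/\pi_0)^{\oplus 1}$. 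The $\pi_i$'s in the lemma are invariant-factor data, not the set of elementwise annihilators, so your ``Smith normal form after Noetherian descent'' is aimed at the wrong invariant; and in any case Smith normal form applies to finitely generated modules over a PID, whereas your $M_0$ is only finitely presented over a polynomial ring over $\mathcal{O}_0$. The Zorn's-lemma step likewise does not touch the actual content of the lemma, namely the \emph{finiteness} of $\{\pi_1,\ldots,\pi_k\}$, which genuinely depends on finite presentation over $\mathcal{A}$ (as your own example $\mathcal{O}/\mathfrak{m}$ shows).

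The paper proceeds quite differently. After the same reduction to $\pi_0 M=0$, it uses Noether normalization to produce a module-finite map $\mathcal{B}=\mathcal{O}[x_1,\ldots,x_d]\to\mathcal{A}$, regards $M$ as a finitely presented $\mathcal{B}$-module, and localizes at the generic point $\mathfrak{p}=\mathfrak{m}[x_1,\ldots,x_d]$ of the special fiber. The local ring $\mathcal{B}_{\mathfrak{p}}$ is a valuation ring with the same value group as $\mathcal{O}$, so the finitely presented module $M_{\mathfrak{p}}$ decomposes as a \emph{finite} direct sum $\bigoplus_{i=1}^k\mathcal{B}_{\mathfrak{p}}/\pi_i$; this is where the $\pi_i$'s and their finiteness come from. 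Clearing denominators yields an inclusion $\bigoplus_{i=1}^k\mathcal{B}/\pi_i\hookrightarrow M$ which is checked to be universally injective as a map of $\mathcal{O}$-modules, hence $\mathcal{O}$-split, with cokernel supported in dimension $<d$. Induction on $\dim\mathrm{Supp}(M)$ then finishes the argument.
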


\begin{proof}
  Without loss of generality, we may assume
  \(\mathcal{A}=(\mathcal{O}/\pi)[x_1,\ldots,x_n]/I\) with \(I\) finitely
  generated. We will do induction on the dimension of the support of \(M\),
  which we denote as \(d\). Now without loss of generality we may assume the
  dimension of \(|\Spec(\mathcal{A})|=|\Spec(\mathcal{A} \otimes_{\mathcal{O}}
  k)|\) is \(d\). By Noether normalization, we may find a finite morphism
  \(k[x_1,\ldots,x_d] \to \mathcal{A} \otimes_{\mathcal{O}} k\). Lifting the
  images of \(x_i\)'s gives us a morphism
  \(\mathcal{B}=\mathcal{O}[x_1,\ldots,x_d] \to \mathcal{A}\) which is
  universally closed and of finite presentation as an algebra, hence of finite
  presentation as a module. Now regarding \(M\) as a finitely presented
  \(\mathcal{B}\)-module, we reduce to the situation where
  \(\mathcal{A}=\mathcal{B}\). Localizing at the generic point of the special
  fiber \(\mathfrak{p}=\mathfrak{m}[x_1,\ldots,x_d]\), we see that
  \(M_{\mathfrak{p}}\) is a finitely presented
  \(\mathcal{B}_{\mathfrak{p}}\)-module. Because \(\mathcal{B}_{\mathfrak{p}}\)
  is a valuation ring which is unramified over \(\mathcal{O}\), i.e., they have
  the same value group. We see that
  \(M_{\mathfrak{p}}=\bigoplus_{i=1}^k(\mathcal{B}_{\mathfrak{p}}/\pi_i\mathcal{B}_{\mathfrak{p}})\).
  By clearing denominator, this gives rise to a morphism
  \(\bigoplus_{i=1}^k\mathcal{B} \rightarrow M\), and hence an injection
  \(\phi: \bigoplus_{i=1}^k(\mathcal{B}/\pi_i) \hookrightarrow M\) after
  multiplying some element \(f \notin \mathfrak{p}\).

  Next, we claim that \(\phi\) is universally injective as an
  \(\mathcal{O}\)-module and the quotient is a finitely presented
  \(\mathcal{B}\)-module whose support has dimension less than \(d\). The claim
  on the dimension of the support is easy and follows from the fact that this
  map is an isomorphism on the generic point of the special fiber
  \(\mathfrak{p}\). To see why this map is universally injective as an
  \(\mathcal{O}\)-module, let us consider the following commutative diagram
  \[
    \xymatrix{
      \bigoplus_{i=1}^k(\mathcal{B}/\pi_i) \ar[d] \ar@{^{(}->}[r] & M \ar[d] \\
      \bigoplus_{i=1}^k(\mathcal{B}_{\mathfrak{p}}/\pi_i\mathcal{B}_{\mathfrak{p}})
      \ar[r]^-{\sim} & M_{\mathfrak{p}}}.
  \]
  Now by~\cite[Lemma \href{http://stacks.math.columbia.edu/tag/058H}{Tag
    05CI}]{stacks-project} and~\cite[Lemma
  \href{http://stacks.math.columbia.edu/tag/058H}{Tag 05CJ}]{stacks-project}, it
  suffices to show that \(\mathcal{B}/\pi_i \to
  \mathcal{B}_{\mathfrak{p}}/\pi_i\mathcal{B}_{\mathfrak{p}}\) is universally
  injective as an \(\mathcal{O}\)-module. This in turn would follow from the
  fact that \(\mathcal{B} \to \mathcal{B}_{\mathfrak{p}}\) is universally
  injective as an \(\mathcal{O}\)-module, by~\cite[Theorem
  \href{http://stacks.math.columbia.edu/tag/058H}{Tag 058K}]{stacks-project} it
  suffices to show \(\mathcal{B}/\pi_i \to
  \mathcal{B}_{\mathfrak{p}}/\pi_i\mathcal{B}_{\mathfrak{p}}\) is injective as
  an \(\mathcal{O}\)-module which one verifies directly.

  Consider the short exact sequence \(0 \to \bigoplus_{i=1}^k(\mathcal{B}/\pi_i)
  \to M \to Q \to 0\). It is easy to see that
  \(\bigoplus_{i=1}^k(\mathcal{B}/\pi_i)\) has the form we want. We see that
  this sequence is universally exact with respect to \(\mathcal{O}\)-module
  structure and \(Q\) is a finitely presented \(\mathcal{B}/(f)\)-module for
  some \(f \notin \mathfrak{m}[x_1,\ldots,x_d]\) and therefore is of the form
  \(Q=\bigoplus_{i=1}^{k'}{(\mathcal{O}/\pi_i')}^{\oplus \lambda_i'}\) by the
  induction hypothesis. Now by~\cite[Theorem
  \href{http://stacks.math.columbia.edu/tag/058H}{Tag 058K}]{stacks-project}, we
  see that the short exact sequence above splits. Therefore we see that \(M\),
  as an \(\mathcal{O}\)-module, has the form we want.
\end{proof}

Now we can state and prove the key observation of this paper.

\begin{thm}
\label{well-defined}
The association \([F] \mapsto [\mathcal{F}_0]\) gives rise
to a well-defined map from \(K_0(X)\) to \(K_0(\mathcal{X}_0)\).
\end{thm}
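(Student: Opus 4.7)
There are two things to show for well-definedness: independence of the chosen formal model, and additivity on short exact sequences in $K_0(X)$.

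For independence, given two formal models $\mathcal{F}_1, \mathcal{F}_2$ of $F$, I would reduce to the case of an inclusion by multiplying $\mathcal{F}_1$ by a suitable $c \in \mathcal{O} \setminus \{0\}$ (which exists by quasi-compactness of $\mathcal{X}$ and finite generation of the models) so that $c\mathcal{F}_1 \hookrightarrow \mathcal{F}_2$ inside the common generic fibre $F$. Multiplication by $c$ is an isomorphism of $\mathcal{O}_{\mathcal{X}}$-modules $\mathcal{F}_1 \xrightarrow{\sim} c\mathcal{F}_1$, so $[\mathcal{F}_{1,0}] = [(c\mathcal{F}_1)_0]$. Setting $T := \mathcal{F}_2/c\mathcal{F}_1$, which is $\mathcal{O}$-torsion and finitely presented, and tensoring the defining short exact sequence with $k$ over $\mathcal{O}$ (using that torsion-free finitely presented modules over the valuation ring $\mathcal{O}$ are flat) yields on $\mathcal{X}_0$ the four-term exact sequence
\[
0 \to T[\pi] \to (c\mathcal{F}_1)_0 \to \mathcal{F}_{2,0} \to T \otimes_{\mathcal{O}} k \to 0.
\]
The required identity $[\mathcal{F}_{1,0}] = [\mathcal{F}_{2,0}]$ in $K_0(\mathcal{X}_0)$ thus reduces to $[T \otimes_{\mathcal{O}} k] = [T[\pi]]$.

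To establish this last identity, observe that the assignment $T \mapsto [T \otimes_{\mathcal{O}} k] - [T[\pi]] \in K_0(\mathcal{X}_0)$ is additive on short exact sequences of $\mathcal{O}$-torsion coherent sheaves, via the Tor long exact sequence attached to the resolution $\mathcal{O} \xrightarrow{\pi} \mathcal{O} \to k$. Now Lemma~\ref{torsion structure}, applied on each of finitely many formal-affine opens covering $\mathcal{X}$, shows that only finitely many $\pi_i \in \mathfrak{m}$ occur globally, and their product annihilates $T$; in particular $\pi^n T = 0$ for some $n$. The filtration $T \supset \pi T \supset \cdots \supset \pi^n T = 0$ then has successive quotients which are coherent on $\mathcal{X}_0$, and on any such $\mathcal{X}_0$-sheaf $G$ one has $G \otimes_{\mathcal{O}} k = G = G[\pi]$, so the assignment vanishes on each graded piece; by additivity it vanishes on $[T]$. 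I expect this dévissage---translating the purely $\mathcal{O}$-module information of Lemma~\ref{torsion structure} into a global uniform annihilator of $T$ and then into a filtration by $\mathcal{X}_0$-sheaves---to be the main delicate point.

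For additivity on short exact sequences $0 \to F' \to F \to F'' \to 0$ in $X$, I would pick any formal model $\mathcal{F}$ of $F$, let $\mathcal{F}''$ be the image of $\mathcal{F}$ in $F''$ modulo its $\mathcal{O}$-torsion (a formal model of $F''$), and take $\mathcal{F}' := \ker(\mathcal{F} \to \mathcal{F}'')$ as a formal model of $F'$; the resulting short exact sequence of $\mathcal{O}$-flat finitely presented sheaves stays exact after reduction modulo $\pi$, giving $[\mathcal{F}_0] = [\mathcal{F}'_0] + [\mathcal{F}''_0]$ in $K_0(\mathcal{X}_0)$.
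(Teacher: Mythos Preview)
Your overall strategy coincides with the paper's: reduce to an inclusion of formal models, obtain a four-term exact sequence on the special fibre, and show the two outer terms have equal class in $K_0(\mathcal{X}_0)$; your treatment of additivity on short exact sequences (saturate a model with respect to the subsheaf) is likewise the same as the paper's.

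The gap is that your d\'evissage for the equality $[T\otimes_{\mathcal{O}} k]=[T[\pi]]$ tacitly assumes $\mathcal{O}$ is discretely valued. The sequence $\mathcal{O}\xrightarrow{\pi}\mathcal{O}\to k$ is a resolution only when $\mathfrak{m}=(\pi)$; in general $\mathrm{Tor}^{\mathcal{O}}_1(T,k)$ is $\ker(\mathfrak{m}\otimes T\to T)$, not $T[\pi]$. Likewise, the successive quotients $\pi^iT/\pi^{i+1}T$ of your filtration are $\mathcal{O}/\pi$-modules, not $k$-modules, so they are not coherent sheaves on $\mathcal{X}_0$, and your identity ``$G\otimes_{\mathcal{O}} k=G$'' fails for such $G$ when $\mathfrak{m}\neq(\pi)$ (for instance $G=\mathcal{O}/\pi$ has $\mathfrak{m}G=\mathfrak{m}/\pi\neq 0$). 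Extracting from Lemma~\ref{torsion structure} only a single annihilator $\pi^n$ throws away precisely the information needed here.

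The paper works over an arbitrary non-archimedean field. It writes the $\mathrm{Tor}_1$ term as $Q'_0=\ker(\mathfrak{m}\otimes Q\to Q)$ and equips both $Q_0$ and $Q'_0$ with filtrations indexed by the value group $\Gamma$, defined in terms of the $\pi_\gamma$-torsion of $Q$. Lemma~\ref{torsion structure} is then used, locally, to show these $\Gamma$-indexed filtrations jump at only finitely many values $|\pi_1|<\cdots<|\pi_k|$, with graded pieces that are genuine coherent sheaves on $\mathcal{X}_0$ and are canonically isomorphic via $x\mapsto \overline{\pi_i\tilde{x}}$. This is the ``delicate point'' you anticipated, but the subtlety lies in coping with a non-principal $\mathfrak{m}$ rather than merely in obtaining a uniform annihilator. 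In the discretely valued case your argument is correct and is essentially the specialisation of the paper's filtration to $\Gamma=\mathbb{Z}$.
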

 
\begin{proof}
  The first thing we have to verify is that for two different formal models of
  \(F\), say \(\mathcal{F}^1\) and \(\mathcal{F}^2\), we will have
  \([\mathcal{F}^1_0]=[\mathcal{F}^2_0]\) as classes in \(K_0(\mathcal{X}_0)\).
  Viewing \(\mathcal{F}^1\) and \(\mathcal{F}^2\) as two lattices inside \(F\),
  then after multiplying \(\mathcal{F}^1\) by an element in \(\mathfrak{m}\)
  with sufficiently large valuation, we may assume it is a subsheaf of
  \(\mathcal{F}^2\).

  In the situation above, we have a short exact sequence
  \[
  0 \to \mathcal{F}^1 \to \mathcal{F}^2 \to Q \to 0
 \]

 Applying \(\otimes_{\mathcal{O_X}}\mathcal{O_X}/\mathfrak{m}\mathcal{O_X}\) we
 get
  \[
  0 \to
  \mathrm{Tor}_{\mathcal{O_X}}^1(\mathcal{O_X}/\mathfrak{m}\mathcal{O_X},Q) \to
  \mathcal{F}^1_0 \to \mathcal{F}^2_0 \to Q_0 \to 0.
  \]
  Observe that
  \(\mathrm{Tor}_{\mathcal{O_X}}^1(\mathcal{O_X}/\mathfrak{m}\mathcal{O_X},Q)=\ker(\mathfrak{m}
  \otimes Q \to Q)\), which we shall denote as \(Q'_0\).

  Now we only have to show \([Q'_0]=[Q_0]\). We will define canonical
  filtrations on both coherent sheaves and use Lemma~\ref{torsion structure} to
  see their successive quotients are naturally isomorphic. For every \(\gamma
  \in \Gamma\), we define
\[\Fil^{\gamma}(Q_0)=\mathrm{Im}(\ker(\cdot\pi_{\gamma}: Q \to Q) \to Q_0)\] and
\[\Fil^{\gamma}(Q'_0)=(\pi_{\gamma} \otimes Q) \cap \ker(\mathfrak{m}
  \otimes Q \to Q)
  \]
  where \(\pi_{\gamma}\) is any element in \(\mathcal{O}\) with valuation
  \(\gamma\). Since \(Q\) is an \(\mathcal{O}\)-torsion finitely presented
  \(\mathcal{O_X}\)-module, we see that as \(\gamma\) goes from \(0\) to some
  \(|\pi|\) the first (resp.\ second) filtration is an increasing (resp.\
  decreasing) filtrations of coherent subsheaves and both filtrations are
  exhaustive and separated. We claim that both filtrations only change at
  finitely many \(\gamma_i\) and there are natural isomorphisms between
  corresponding successive quotients. Because both claims are local properties,
  after choosing a finite covering of \(\mathcal{X}\) by affine formal schemes
  we may reduce to the situation where \(\mathcal{X}=\Spf(\mathcal{A})\). Now we
  have \(Q=\widetilde{M}\) for some \(\mathcal{O}\)-torsion finitely presented
  \(\mathcal{A}\)-module \(M\). By Lemma~\ref{torsion structure} we see that as
  an \(\mathcal{O}\)-module,
  \[M=\bigoplus_{i=1}^k{(\mathcal{O}/\pi_i)}^{\oplus \lambda_i}.\] We may assume
  \(|\pi_1| < \cdots < |\pi_k|\). Now by direct computation, we see that both
  filtrations only change when \(\gamma=|\pi_i|\) for \(1 \leq i \leq k\). On
  the successive quotient we may define
  \begin{align*}f_i: &\Gr^i(Q_0) =\frac{\mathrm{Im}(\ker(\cdot\pi_i: Q \to Q) \to
      Q_0)}{\mathrm{Im}(\ker(\cdot\pi_{i+1}: Q \to Q) \to Q_0)} \\
      &\to \Gr^i(Q'_0)=\frac{(\pi_i \otimes Q) \cap \ker(\mathfrak{m} \otimes Q \to
      Q)}{(\pi_{i-1} \otimes Q) \cap \ker(\mathfrak{m} \otimes Q \to Q)}
  \end{align*}
  by \(f_i(x)=\overline{\pi_i \tilde{x}}\) where \(\tilde{x}\) is any lift of
    \(x\) in \(\mathrm{Im}(\ker(\cdot\pi_i: Q \to Q) \to Q_0)\). Again one
    checks directly that \(f_i\) is a well-defined isomorphism and commutes with
    localization. This proves our claim that \([Q'_0]=[Q_0]\), which implies
    that the class \([\mathcal{F}_0]\) is well defined.

    Secondly we note that every short exact sequence of coherent sheaves on
    \(X\) will have a formal model, i.e., a short exact sequence of
    \(\mathcal{O}\)-flat finitely presented sheaves on \(\mathcal{X}\). Indeed,
    consider \( 0 \to F \to G \to H \to 0 \) a short exact sequence of coherent
    sheaves on \(X\). We choose a formal model of \(G\), say \(\mathcal{G}\).
    Then \(\mathcal{F} := F \cap \mathcal{G} \) is a saturated finitely
    presented (c.f.~\cite[Proposition 1.1 and Lemma 1.2]{BL1}) sheaf whose
    generic fiber is \(F\). Hence \( \mathcal{H} := \mathcal{G}/\mathcal{F}\) is
    a \(\mathcal{O}\)-flat finitely presented sheaf on \(\mathcal{X}\) whose
    generic fiber is \(H\). Now because \(\mathcal{H}\) is \(\mathcal{O}\)-flat,
    after tensoring with \(k\) we get a short exact sequence \( 0 \to
    \mathcal{F}_0 \to \mathcal{G}_0 \to \mathcal{H}_0 \to 0\). So we see this
    definition respects the relation coming from short exact sequences. This
    proves our theorem.
\end{proof}

\begin{remark}
\label{S-equivalence}
From the proof of Theorem~\ref{well-defined} we see that for any two choices of
formal models \(\mathcal{F}^i\) of \(F\), we can define a decreasing filtration
\(\Fil^i(\mathcal{F}^1_0)\) and an increasing filtration
\(\Fil^i(\mathcal{F}^2_0)\) along with maps \(\phi^i: \Fil^i(\mathcal{F}^1_0)
\to (\mathcal{F}^2_0)/(\Fil^i(\mathcal{F}^2_0))\) such that these maps give rise
to isomorphisms between \(\Gr^i(\mathcal{F}^1_0)\) and
\(\Gr^i(\mathcal{F}^2_0)\). This will be used later, see
Theorem~\ref{S-equivalence theorem}.
\end{remark}

From now on we will assume that \(\mathcal{X}_0\) is projective, with a fixed
ample invertible sheaf \(H\). Consider \(\Spa(K',R') \to \Spa(K,\mathcal{O})\)
as in the definition of the category \(\mathbf{V}/K\). We may form a formal
model of \(X_{K'}\) by taking \(\mathcal{X} \times_{\Spf{\mathcal{O}}}
\Spf{\mathcal{O}_{K'}}\). The special fibre of this formal model will be
\(\mathcal{X}_0 \times_{k} k'\) and carry the pullback of \(H\) which is still
ample, where \(k'\) is the residue field of \(\mathcal{O}_{K'}\). Then we can
define the Hilbert polynomial of a coherent sheaf on \(X\) (or more generally
\(X^{\mathrm{ad}} \times_{\Spa(K,\mathcal{O})} \Spa(K',R')\)) in the following
way:

\begin{defi}
  For any coherent sheaf \(F\) on \(X\), we define the \emph{Hilbert polynomial}
  of \(F\) to be
  \[
    P_H(F)(n)=\chi_{\mathcal{X}_0}(Sp(F) \otimes H^{\otimes n})
  \]
  where \(Sp\) is the specialization map of K groups from previous theorem.

  More generally for any coherent sheaf \(F\) on \(X^\mathrm{ad}
  \times_{\Spa(K,\mathcal{O})} \Spa(K',R')\), we define its Hilbert polynomial
  to be that of its restriction to \(X^\mathrm{ad} \times_{\Spa(K,\mathcal{O})}
  \Spa(K', \mathcal{O}_{K'})\) with respect to the formal model \(\mathcal{X}_0
  \times_{k} k'\) and the pullback of \(H\).
\end{defi}

\begin{prop}
\label{locally constant}
  Suppose \(F \in Coh(X \times S)\) is a flat family of coherent sheaves on
  \(X\) parametrized by a quasi-compact quasi-separated connected rigid space
  \(S\), and suppose \(\mathcal{X}_0\) is projective with an ample invertible
  sheaf \(H\). Then any two fibres of \(F\) will have the same Hilbert
  polynomial (i.e., the Hilbert polynomial is locally constant in flat
  families).

  More generally if \(F \in Coh(X^{\mathrm{ad}} \times_{\Spa(K,\mathcal{O})}
  V)\) is a flat family of coherent sheaves with \((V \to \Spa(K',R')) \in
  \mathbf{V}/K\). Then the Hilbert polynomial function gives a partition of
  \(V\) into union of connected components.
\end{prop}

\begin{proof}
  Let \(\mathcal{S}\) be a formal model of \(S\). Then \(\mathcal{X} \times
  \mathcal{S}\) is a formal model of \(X \times S\), and \(F\) extends to a
  coherent sheaf \(\mathcal{F}\) on \(\mathcal{X} \times \mathcal{S}\).

  Applying~\cite[Theorem 4.1]{BL2}, we see that after a possible admissible
  formal blowing-up of \(\mathcal{S}\) we may assume that \(\mathcal{F}\) is
  flat over \(\mathcal{S}\). Hence \(\mathcal{F}_0\) will be flat over
  \(\mathcal{S}_0\). Then it follows from cohomology and base change
  (c.f.~\cite[Corollary 1 in Section 5]{Mumford}) that the reduction of two
  special fibers of \(F\) have the same Hilbert polynomial.

  We see that the Hilbert polynomial is locally constant in a flat family, so
  the statement above would still hold even if we do not assume
  quasi-compactness or quasi-separatedness of \(S\).

  To prove the more general statement it suffices to notice that for a locally
  finite type adic space \(V\) over \(\Spa(K',R')\), its connected components
  are in bijection with the connected components of \(V \times_{\Spa(K',R')}
  \Spa(K',\mathcal{O}_{K'})\). Now by locally constancy in the case of rigid
  space proved above, we see that the Hilbert polynomial gives rise to a
  partition of \(V \times_{\Spa(K',R')} \Spa(K',\mathcal{O}_{K'})\) into union
  of connected components.
\end{proof}

The above discussion already yields the following interesting consequence for
the geometry of non-archimedean Hopf surfaces, which were defined and studied by
Voskuil in~\cite{Hopf}.

\begin{prop}
\label{Hopf reduction}
  Non-archimedean Hopf surfaces over a non-archimedean field have no projective
  reduction.
\end{prop}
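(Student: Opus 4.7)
The plan is to obtain a contradiction by exhibiting a line bundle $L_q$ on a Hopf surface $X$ that sits in a connected flat family together with $\mathcal{O}_X$, so that Proposition~\ref{locally constant} would force a non-zero coherent sheaf on the projective reduction to have identically vanishing Hilbert polynomial.

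I would first realize a non-archimedean Hopf surface concretely as $X = Y/\Gamma$, where $Y = (\mathbb{A}^2_K \setminus \{0\})^{\mathrm{an}}$ and $\Gamma = \langle \sigma \rangle \cong \mathbb{Z}$ acts by $\sigma(x,y) = (qx, qy)$ with $|q| < 1$ (the more general Voskuil setups are handled analogously). For each $t \in K^\times$, the character $\sigma^n \mapsto t^n$ yields a line bundle $L_t$ on $X$, with $L_1 = \mathcal{O}_X$; these assemble into a single line bundle $\mathcal{L}$ on $X \times \mathbb{G}_m^{\mathrm{an}}$ obtained by descending the trivial line bundle on $Y \times \mathbb{G}_m^{\mathrm{an}}$ via the $\Gamma$-linearization in which $\sigma$ acts by the tautological coordinate $t$. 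Restricting to the quasi-compact connected annulus $S = \{|q| \leq |t| \leq 1\} \subset \mathbb{G}_m^{\mathrm{an}}$ produces a flat family of coherent sheaves on $X \times S$ whose fibres at $t = 1$ and $t = q$ are $\mathcal{O}_X$ and $L_q$. I would then observe that the coordinate $x$ on $Y$ is $\sigma$-semi-invariant of weight $q$, so it descends to a global section of $L_q$ whose vanishing locus is the Tate elliptic curve $E = \{x = 0\}/\Gamma$; writing $j\colon E \hookrightarrow X$, this gives the short exact sequence
\[
0 \to \mathcal{O}_X \xrightarrow{\,\cdot x\,} L_q \to j_*(L_q|_E) \to 0
\]
on $X$.

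Assuming for contradiction that $X$ admits a formal model $\mathcal{X}$ whose special fibre is projective with a chosen ample $H$, Proposition~\ref{locally constant} applied to the family on $X \times S$ yields $P_H(L_q) = P_H(\mathcal{O}_X)$, and additivity of the specialization map $Sp\colon K_0(X) \to K_0(\mathcal{X}_0)$ (Theorem~\ref{well-defined}) combined with the short exact sequence then forces $P_H([j_*(L_q|_E)]) \equiv 0$. To reach the contradiction I would pick any formal model $\mathcal{M}$ of $j_*(L_q|_E)$ on $\mathcal{X}$; since $\mathcal{M}$ is a non-zero $\mathcal{O}$-flat coherent sheaf, Nakayama's lemma shows that $\mathcal{M}_0$ is a non-zero coherent sheaf on the projective variety $\mathcal{X}_0$, and ampleness of $H$ implies that $n \mapsto \chi(\mathcal{X}_0, \mathcal{M}_0 \otimes H^{\otimes n})$ is a strictly positive polynomial for $n \gg 0$, hence non-zero.

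The principal conceptual obstacle is the identification of the correct line bundle $L_q$ together with an effective divisor $E$ so that the $K$-theoretic difference $[L_q] - [\mathcal{O}_X]$ is represented by an honest non-zero coherent sheaf whose formal models survive reduction; once this is set up, everything else is bookkeeping around Proposition~\ref{locally constant} and Theorem~\ref{well-defined}.
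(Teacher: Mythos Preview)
Your argument is correct and follows essentially the same route as the paper: both exploit Voskuil's family of line bundles over $\mathbb{G}_m$ containing $\mathcal{O}_X$ and a nontrivial member with a section, then use Proposition~\ref{locally constant} and additivity to force the cokernel of $\mathcal{O}_X \hookrightarrow L$ to have zero Hilbert polynomial, a contradiction. The only difference is cosmetic---you unwind Voskuil's construction explicitly (identifying $L_q$ and its section $x$ cutting out the Tate curve) and spell out the Nakayama/ampleness step showing a nonzero sheaf has nonzero Hilbert polynomial, whereas the paper simply cites~\cite{Hopf} and asserts the last implication.
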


\begin{proof}
  In the paper mentioned above, Voskuil proved that there is a flat family of
  line bundles on a Hopf surface parametrized by \(\mathbb{G}_m\) with identity
  in \(\mathcal{G}_m\) corresponds to \(\mathcal{O}_X\). Moreover, he proved
  that there are infinitely many nontrivial (i.e., not isomorphic to
  \(\mathcal{O}_X\)) line bundles in that family which possess sections.

  Now suppose that a Hopf surface \(X\) has a projective reduction. Then we can
  define Hilbert polynomials as above. Because \(\mathbb{G}_m\) is connected,
  the line bundles it parametrizes will have the same Hilbert polynomial by
  Proposition~\ref{locally constant}. Let \(\mathcal{L}\) be a nontrivial line
  bundle possessing a nontrivial section, so that we get an injection \(0 \to
  \mathcal{O}_X \to \mathcal{L} \to Q \to 0\). We observe immediately that the
  cokernel \(Q\) has zero Hilbert polynomial, which means \(Q\) is a zero sheaf.
  This is a contradiction as \(\mathcal{L}\) is assumed to be a nontrivial line
  bundle.
\end{proof}

Using Hilbert polynomial we may define a natural partition on any moduli functor
of coherent sheaves, as illustrated below.

\begin{defi}
\label{pictau}
Assume \(X\) has a projective reduction and fix a projective reduction
\(\mathcal{X}_0\) along with an ample line bundle on it. Let
\(\underline{\Pic}^{P}_{X/K}\) be the open and closed sub-functor of
\(\underline{\Pic}_{X/K}\) parametrizing line bundles having Hilbert polynomial
the same as that of \(\mathcal{O}_X\).
\end{defi}

\begin{remark}
  \leavevmode
  \begin{enumerate}
  \item This is an open and closed sub-functor because of
    Proposition~\ref{locally constant}.
  \item It is not clear to the author if the definition above depends on choices
    of the projective reduction and the ample line bundle on it. The original
    purpose of this paper was on the behavior of \(\Pic^0_X\), regarding this
    direction we do not have to care about this issue.
  \end{enumerate}
\end{remark}

% * Semistable Reduction
\section{Semistable Reduction of Coherent Sheaves}
\label{Semistable Reduction}

In this section we prove a semistable reduction type theorem for semistable
coherent sheaves, following a similar method as in~\cite{Langton}.
Following~\cite{Huybrechts-Lehn}, we make the definitions below. From now on we
will assume all the formal schemes appearing below have projective special
fibers.

\begin{defi}
  Let \(\mathcal{F}\) be an \(\mathcal{O}\)-flat finitely presented sheaf on
  \(\mathcal{X}\).
  \begin{enumerate}
  \item The \emph{Hilbert polynomial} of \(\mathcal{F}\) is the Hilbert
    polynomial of \(\mathcal{F}_0\).
  \item The \emph{dimension} of \(\mathcal{F}\) is the dimension of its support,
    denoted as \(\dim \mathcal{F}\).
  \item The \emph{rank} of \(\mathcal{F}\) is the leading coefficient of its
    Hilbert polynomial divided by that of \(\mathcal{O}_X\), denoted as
    \(\mathrm{rk}(\mathcal{F})\).
  \item \(\mathcal{F}\) is said to be \emph{pure} if any non-trivial coherent subsheaf of
    \(\mathcal{F}\) with a \(\mathcal{O}\)-flat quotient has dimension \(n=\dim(\mathcal{F})\).
  \item The \emph{dimension} of a coherent sheaf \(F\) on \(X\) is the dimension
    of its support. It is called \emph{pure} if any nonzero coherent subsheaf has the
    same dimension.
  \end{enumerate}
\end{defi}

The following lemma describes the relation between pureness of a coherent sheaf
and pureness of its formal model.

\begin{lemm}
\label{formal-rigid pure}
  Let \(F\) be a coherent sheaf on \(X\). The following are equivalent:
   \leavevmode
  \begin{enumerate}
  \item \(F\) is pure;
  \item There is a formal model of \(F\) which is pure;
  \item Any formal model of \(F\) is pure.
  \end{enumerate}
\end{lemm}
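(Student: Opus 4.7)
The plan is to establish the equivalences via the circular chain $(3) \Rightarrow (2) \Rightarrow (1) \Rightarrow (3)$. The implication $(3) \Rightarrow (2)$ is immediate, since every coherent sheaf on $X$ admits at least one formal model on $\mathcal{X}$.

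For $(2) \Rightarrow (1)$, I would start from a pure formal model $\mathcal{F}$ of $F$ and an arbitrary nonzero coherent subsheaf $G \subset F$. The sheaf $\mathcal{G} := G \cap \mathcal{F}$, formed inside $F$, is (exactly as used in the proof of Theorem~\ref{well-defined}, via~\cite[Proposition 1.1 and Lemma 1.2]{BL1}) a saturated finitely presented subsheaf of $\mathcal{F}$ with generic fiber $G$ and $\mathcal{O}$-flat quotient $\mathcal{F}/\mathcal{G}$. Purity of $\mathcal{F}$ then forces $\dim(\mathcal{G}) = \dim(\mathcal{F})$, and transferring to the rigid side yields $\dim(G) = \dim(F)$, so $F$ is pure.

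For $(1) \Rightarrow (3)$, assume $F$ is pure and let $\mathcal{F}$ be any formal model. Given a coherent $\mathcal{G} \subset \mathcal{F}$ with flat quotient, the short exact sequence $0 \to \mathcal{G} \to \mathcal{F} \to \mathcal{F}/\mathcal{G} \to 0$ together with $\mathcal{O}$-flatness of $\mathcal{F}$ and $\mathcal{F}/\mathcal{G}$ forces $\mathcal{G}$ to be $\mathcal{O}$-flat. Hence $\mathcal{G}$ is itself a formal model of its generic fiber $G$, a coherent subsheaf of $F$. If $G \neq 0$ the purity hypothesis gives $\dim(G) = \dim(F)$ and hence $\dim(\mathcal{G}) = \dim(\mathcal{F})$; if $G = 0$ then $\mathcal{G}$ is simultaneously $\mathcal{O}$-torsion and $\mathcal{O}$-flat, so $\mathcal{G} = 0$ and there is nothing to verify.

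The main obstacle shared by both nontrivial implications is the dimension comparison $\dim(\mathcal{H}) = \dim(\mathcal{H}^{\mathrm{rig}})$ for any $\mathcal{O}$-flat finitely presented sheaf $\mathcal{H}$ on $\mathcal{X}$. This reduces to the analogous equality of dimensions of the support, namely that for an admissible flat formal $\mathcal{O}$-scheme $\mathcal{Y}$ one has $\dim \mathcal{Y}_0 = \dim \mathcal{Y}^{\mathrm{rig}}$. This is standard in Bosch--L\"utkebohmert theory, since locally $\mathcal{Y} = \Spf \mathcal{A}$ with $\mathcal{A}$ topologically finitely presented and $\mathcal{O}$-flat, and the Krull dimensions of $\mathcal{A} \otimes_{\mathcal{O}} k$ and of the Tate affinoid $\mathcal{A} \otimes_{\mathcal{O}} K$ agree.
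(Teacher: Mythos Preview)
Your proposal is correct and follows essentially the same approach as the paper. The paper's proof is a one-sentence sketch: it asserts that the category of coherent subsheaves of $F$ is equivalent (via $G \mapsto G \cap \mathcal{F}$ and $\mathcal{G} \mapsto \mathcal{G}^{\mathrm{rig}}$) to the category of finitely presented subsheaves of $\mathcal{F}$ with $\mathcal{O}$-flat quotient, and leaves the rest to the reader. Your argument unpacks exactly this bijection in both directions and, in addition, makes explicit the dimension comparison $\dim(\mathcal{H}^{\mathrm{rig}}) = \dim(\mathcal{H}_0)$ for $\mathcal{O}$-flat finitely presented $\mathcal{H}$, which the paper uses tacitly.
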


  This lemma follows immediately from the fact that the category of coherent
  subsheaves in \(F\) with morphisms being inclusions and the category of
  finitely presented subsheaves in \(\mathcal{F}\) with \(\mathcal{O}\)-flat
  quotient and morphisms being inclusions are equivalent. With one direction
  functor being intersecting with \(\mathcal{F}\) and the functor of taking
  generic fiber in the reverse direction. For the sake of completeness, let us record a proof below.

\begin{proof}
  It suffices to show that given \(\mathcal{F}\), formal model of \(F\), \(F\) is pure if and only if
  \(\mathcal{F}\) is pure. This is because formal model of a coherent sheaf always exists.
  
  Now suppose \(G \subset F\) is a subsheaf with support of smaller dimension than that of \(F\),
  then \(\mathcal{G} \coloneqq \mathcal{F} \cap G\) is a subsheaf of \(\mathcal{F}\) and is a formal model of \(G\).
  Notice that a \(\mathcal{O}\)-flat finitely presented sheaf on \(\mathcal{X}\) has flat support,
  hence the dimension of support of \(\mathcal{G}\) is the same as that of its generic fibre.
  One may check that \(\mathcal{G}\) defined above, as a subsheaf, satisfies the condition that \(\mathcal{F}/\mathcal{G}\) is \(\mathcal{O}\)-flat,
  therefore making \(\mathcal{F}\) not pure.
  
  On the other hand, suppose \(\mathcal{G} \subset \mathcal{F}\) has flat quotient and the dimension of the support of \(\mathcal{G}\)
  is smaller than that of \(\mathcal{F}\). Then by flatness again, we see that the dimension of the support of \(\mathcal{G}^{rig} \eqqcolon G \subset F\)
  is smaller than that of \(F\). Therefore \(F\) is also not pure.
\end{proof}

\begin{defi}
  Let \(\mathcal{F}\) be an \(\mathcal{O}\)-flat finitely presented sheaf on
  \(\mathcal{X}\).
  \begin{enumerate}
  \item \(\mathcal{F}\) is said to be \emph{semistable} if it is pure and for
    any finitely presented subsheaf \(\mathcal{G} \subset \mathcal{F}\) we have
    \(p(\mathcal{G}) \leq p(\mathcal{F})\), where \(p(\mathcal{F})\) is the
    reduced Hilbert polynomial, i.e. Hilbert polynomial divided by its leading
    coefficient, of a finitely presented sheaf \(\mathcal{F}\). Here \(p(\mathcal{G}) \leq p(\mathcal{F})\) 
    means coefficient-wise.
    We denote the
    \((k+1)\)-th coefficient of reduced Hilbert polynomial of \(\mathcal{F}\) by
    \(a_k(\mathcal{F})\).
  \item Let \(f=x^n+b_1x^{n-1}+\cdots+b_n\) be the reduced Hilbert polynomial of
    the maximal destabilizing sheaf of \(\mathcal{F}_0\). We define the
    \emph{maximal destabilizing sheaf of codimension \(k\)} of \(\mathcal{F}_0\)
    as the maximal subsheaf \(B \subset \mathcal{F}_0\) such that \(a_i(B)=b_i\)
    for all \(1 \leq i \leq k\). The existence of the maximal destabilizing
    sheaf of codimension \(k\) is guaranteed by Harder--Narasimhan theory.
  \item \(\mathcal{F}\) is \emph{semistable of codimension \(k\)} if for any
    coherent subsheaf \(\mathcal{G} \subset \mathcal{F}\) with
    \(\mathcal{O}\)-flat quotient, we have \(a_i(\mathcal{G}) \leq
    a_i(\mathcal{F})\) for all \(1 \leq i \leq k\).
  \item The \emph{semistable codimension} of \(\mathcal{F}\) is the biggest
    \(k\) for which \(\mathcal{F}\) is semistable of codimension \(k\).
  \item We make similar definitions for a coherent sheaf \(F\) on \(X\).
  \end{enumerate}
\end{defi}

For a reference of Harder--Narasimhan theory, one may consult~\cite[Section 1.3]{Huybrechts-Lehn}.

\begin{remark}
  One word on the existence of Harder--Narasimhan filtration in the rigid setup
  is necessary. One can check the proof of existence and uniqueness of
  Harder--Narasimhan filtrations in the algebraic setup (c.f.~\cite[Theorem
  1.3.4]{Huybrechts-Lehn}) to see that we only need Noetherianness of coherent
  sheaves and additivity of Hilbert polynomial (with respect to short exact
  sequences) to do that. In the rigid setup both properties still hold.
\end{remark}

\begin{lemm}
\label{formal-rigidequivalence}
  Let \(F\) be a coherent sheaf on \(X\). The following are equivalent:
\leavevmode
  \begin{enumerate}
  \item \(F\) is semistable of codimension \(k\);
  \item There is a formal model of \(F\) which is semistable of codimension
    \(k\);
  \item Any formal model of \(F\) is semistable of codimension \(k\).
  \end{enumerate}
\end{lemm}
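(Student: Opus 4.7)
The approach mirrors Lemma~\ref{formal-rigid pure}. The definition of ``semistable of codimension \(k\)'' (in both the formal and the rigid setting) is phrased purely in terms of subsheaves together with the first \(k\) coefficients \(a_i\) of the reduced Hilbert polynomial. Both pieces of data are already known to correspond well between a formal model and its generic fibre, so once we unwind the definitions the three statements become the same list of inequalities and the equivalence is essentially formal.

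More precisely, fix a formal model \(\mathcal{F}\) of \(F\). The proof of Lemma~\ref{formal-rigid pure} exhibits a bijection
\[
\{\mathcal{G} \subset \mathcal{F} \text{ finitely presented, } \mathcal{F}/\mathcal{G} \text{ is } \mathcal{O}\text{-flat}\} \;\longleftrightarrow\; \{G \subset F \text{ coherent}\},
\]
given by taking generic fibre in one direction and by \(G \mapsto G \cap \mathcal{F}\) in the other. Under this correspondence \(\mathcal{G}\) is a formal model of \(G = \mathcal{G}^{\mathrm{rig}}\), so by the very definition of the Hilbert polynomial on \(X\) together with Theorem~\ref{well-defined} (which ensures the specialization does not depend on the chosen model) we have \(P_H(\mathcal{G}) = P_H(G)\); in particular \(a_i(\mathcal{G}) = a_i(G)\) for every \(i\), and similarly \(a_i(\mathcal{F}) = a_i(F)\).

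With these two facts the implications write themselves. For (1)\(\Rightarrow\)(3): given any formal model \(\mathcal{F}\) and any flat-quotient subsheaf \(\mathcal{G} \subset \mathcal{F}\), the inequalities \(a_i(\mathcal{G}) \leq a_i(\mathcal{F})\) for \(i \leq k\) are literally the inequalities \(a_i(G) \leq a_i(F)\) guaranteed by (1). The implication (3)\(\Rightarrow\)(2) is trivial since \(F\) admits a formal model. For (2)\(\Rightarrow\)(1) we run the same argument in reverse starting from a specific semistable-of-codimension-\(k\) formal model: any coherent \(G \subset F\) is matched with \(\mathcal{G} = G \cap \mathcal{F}\), which has \(\mathcal{O}\)-flat quotient, and the inequalities transfer back through the Hilbert polynomial equality. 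The only nontrivial ingredient is precisely this transfer of the Hilbert polynomial across the correspondence, and that has already been arranged by Theorem~\ref{well-defined}; no further obstacle arises.
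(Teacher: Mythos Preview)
Your proof is correct and follows the same approach as the paper, which simply remarks that the argument is identical to that of Lemma~\ref{formal-rigid pure}. You have in fact spelled out more detail than the paper does: the subsheaf correspondence together with the equality of Hilbert polynomials (via Theorem~\ref{well-defined}) is exactly the mechanism intended, and your chain of implications (1)\(\Rightarrow\)(3)\(\Rightarrow\)(2)\(\Rightarrow\)(1) is the natural unwinding.
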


\begin{proof}
  The proof is the same as that of the previous lemma.
\end{proof}  

\begin{defi}
  Let \(\mathcal{F}\) be an \(\mathcal{O}\)-flat finitely presented sheaf on
  \(\mathcal{X}\). Consider an exact sequence \(0 \to \mathcal{B}_0 \to
  \mathcal{F}_0 \to \mathcal{G}_0 \to 0\) of coherent sheaves on
  \(\mathcal{X}_0\). We say such a sequence is \emph{liftable modulo \(\pi \in
    \mathfrak{m}\)} if there is an exact sequence of finitely presented sheaves
  \(0 \to \mathcal{B} \to \mathcal{F}\otimes \mathcal{O}/(\pi) \to \mathcal{G}
  \to 0\) whose special fiber is the given sequence above with \(\mathcal{G}\)
  flat over \(\mathcal{O}/(\pi)\).
\end{defi}

\begin{lemm}
  Let \(\mathcal{F}\) be an \(\mathcal{O}\)-flat finitely presented sheaf on
  \(\mathcal{X}\). Consider \(0 \to \mathcal{B}_0 \to \mathcal{F}_0 \to
  \mathcal{G}_0 \to 0\), a short exact sequence of coherent sheaves on
  \(\mathcal{X}_0\).
\label{flip lemma}
  \leavevmode
  \begin{enumerate}
  \item If \(\Hom(\mathcal{B}_0,\mathcal{G}_0)=\{0\}\), there exists a biggest
    (in the sense of having maximal valuation) \(\pi \in \mathfrak{m}\) such
    that the sequence is liftable modulo \(\pi\). Here \(\pi\) could be \(0\),
    by which we mean the sequence can be lifted all the way to \(\mathcal{O}\).
  \item In the following, assume the \(\pi\) above is not \(0\). Let
    \(\mathcal{F}^{(1)}\) be the kernel of the composition \(\mathcal{F} \to
    \mathcal{F}\otimes \mathcal{O}/(\pi) \to \mathcal{G}\). Then
    \(\mathcal{F}^{(1)}\) is finitely presented, and we have a short exact
    sequence \(0 \to \mathcal{G} \to \mathcal{F}^{(1)}\otimes \mathcal{O}/(\pi)
    \to \mathcal{B} \to 0\). Furthermore, \(\mathcal{F}/\mathcal{F}^{(1)}\) is
    \(\pi\)-torsion.
  \item Moreover the reduction of our short exact sequence above \(0 \to
    \mathcal{G}_0 \to \mathcal{F}^{(1)}_0 \to \mathcal{B}_0 \to 0\) does not
    split.
    \end{enumerate}
\end{lemm}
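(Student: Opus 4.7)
The plan for part (1) is to examine the set $T$ of elements $\pi\in\mathfrak{m}\cup\{0\}$ modulo which the given sequence lifts, and show it contains an element of maximal valuation. Downward closure (if the sequence lifts modulo $\pi$ and $\pi\mid\pi'$ then it lifts modulo $\pi'$) is immediate by tensoring the given lift with $\mathcal{O}/(\pi')$, flatness of the quotient being preserved. For existence of a maximum, the crucial input is that the hypothesis $\Hom(\mathcal{B}_0,\mathcal{G}_0)=0$ forces the lift to be unique as a subsheaf: by the standard deformation-theoretic interpretation, infinitesimal deformations of the flag $\mathcal{B}_0\subset\mathcal{F}_0$ inside the fixed ambient deformation $\mathcal{F}\otimes\mathcal{O}/(\pi)$ are controlled by $\Hom(\mathcal{B}_0,\mathcal{G}_0)$, whose vanishing implies that a lift, if it exists at level $\pi$, is unique. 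This canonicity lets one coherently patch lifts along any cofinal chain $\gamma_n\uparrow\gamma_0$ of valuations in $T$, and the $\pi$-adic completeness of $\mathcal{O}$ assembles them into a lift at the supremum.

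For part (2), I would set $\mathcal{F}^{(1)}:=\ker\!\left(\mathcal{F}\twoheadrightarrow\mathcal{F}\otimes\mathcal{O}/(\pi)\twoheadrightarrow\mathcal{G}\right)$. Finite presentation follows because $\mathcal{F}^{(1)}$ is the kernel of a surjection between finitely presented sheaves on the coherent formal scheme $\mathcal{X}$. Placing $\mathcal{F}^{(1)}$ into the tautological sequence $0\to\mathcal{F}^{(1)}\to\mathcal{F}\to\mathcal{G}\to 0$ and tensoring with $\mathcal{O}/(\pi)$ yields a long exact sequence; since $\pi\mathcal{G}=0$, one computes $\mathrm{Tor}^{\mathcal{O}}_1(\mathcal{G},\mathcal{O}/(\pi))\cong\mathcal{G}$, and the surjection $\mathcal{F}/\pi\mathcal{F}\twoheadrightarrow\mathcal{G}$ has kernel $\mathcal{B}$ by the very definition of $\mathcal{B}$, which extracts the asserted short exact sequence $0\to\mathcal{G}\to\mathcal{F}^{(1)}/\pi\mathcal{F}^{(1)}\to\mathcal{B}\to 0$. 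The identification $\mathcal{F}/\mathcal{F}^{(1)}\cong\mathcal{G}$ shows this quotient is annihilated by $\pi$.

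For part (3), the plan is to argue by contradiction using the maximality of $\pi$ established in part (1). Suppose the reduced sequence $0\to\mathcal{G}_0\to\mathcal{F}^{(1)}_0\to\mathcal{B}_0\to 0$ splits; the goal is to extract from the splitting a lift of the \emph{original} sequence $0\to\mathcal{B}_0\to\mathcal{F}_0\to\mathcal{G}_0\to 0$ modulo $\pi\pi'$ for some $\pi'$ of positive valuation, which would contradict maximality. Concretely, a section $s\colon\mathcal{B}_0\to\mathcal{F}^{(1)}_0$ should be used, in combination with the short exact sequence of (2), to construct a candidate subsheaf $\widetilde{\mathcal{B}}\subset\mathcal{F}\otimes\mathcal{O}/(\pi\pi')$ extending $\mathcal{B}$, and one must verify both that the resulting quotient is $\mathcal{O}/(\pi\pi')$-flat and that it reduces correctly on $\mathcal{X}_0$. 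I expect this flatness check to be the principal technical hurdle: it amounts to the vanishing of a Tor class controlled precisely by the obstruction to the splitting, so that after possibly shrinking $\pi'$ the assumed splitting kills this obstruction and yields the improved lift, delivering the contradiction.
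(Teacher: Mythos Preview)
Your plan for parts (2) and (3) is essentially the paper's argument. For (2) the paper derives the sequence by looking at the filtration $\pi\mathcal{F}^{(1)}\subset\pi\mathcal{F}\subset\mathcal{F}^{(1)}$ directly, while you use the Tor long exact sequence; these are the same computation. For (3) the paper does exactly what you outline: from a hypothetical splitting it extracts (via a limit/openness argument) a short exact sequence $0\to\mathcal{B}'\to\mathcal{F}^{(1)}/\pi'\mathcal{F}^{(1)}\to\mathcal{G}'\to 0$ with $\mathcal{G}'$ flat over $\mathcal{O}/(\pi')$ and with $\pi\mathcal{F}\hookrightarrow\mathcal{F}^{(1)}\twoheadrightarrow\mathcal{G}'$ surjective, then takes $\tilde{\mathcal{B}}=\ker\bigl(\mathcal{F}^{(1)}/\pi\pi'\mathcal{F}\to\mathcal{G}'\bigr)$ and checks the quotient $\tilde{\mathcal{G}}$ sits in $0\to\mathcal{G}'\to\tilde{\mathcal{G}}\to\mathcal{G}\to 0$ with $\tilde{\mathcal{G}}\otimes\mathcal{O}/(\pi)\cong\mathcal{G}$, giving flatness over $\mathcal{O}/(\pi\pi')$ and the contradiction.

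Part (1) is where your route genuinely differs from the paper's, and there is a soft spot. The paper avoids any limit argument: it observes that the hypothesis $\Hom(\mathcal{B}_0,\mathcal{G}_0)=0$ makes the corresponding point of the formal Quot scheme $\mathrm{Quot}_{\mathcal{F}/\mathcal{X}/\mathcal{O}}$ isolated in its special fibre, so the connected component through that point is $\Spec$ of a finitely presented local quotient of $\mathcal{O}$, hence $\Spec(\mathcal{O}/(\pi))$ for a single $\pi$. This packages existence of the maximum in one stroke and is insensitive to whether the valuation is discrete. Your argument instead takes a cofinal chain $\gamma_n\uparrow\gamma_0$ and appeals to completeness to assemble a lift ``at the supremum''. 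The uniqueness step is fine, but the assembly step is delicate when the value group is dense: a priori $\gamma_0$ need not lie in the value group $\Gamma$, and $\varprojlim_n \mathcal{O}/(\pi_{\gamma_n})$ need not be of the form $\mathcal{O}/(\pi)$ for any $\pi\in\mathfrak{m}$. To make your approach rigorous you would still need a finiteness input (e.g.\ that the relevant ideal of $\mathcal{O}$ governing liftability is finitely generated, hence principal), which is precisely what the Quot-scheme finite-presentation argument supplies. So your strategy is morally right but, as written, has a gap in the non-discretely valued case; the paper's use of the Quot scheme is what closes it.
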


\begin{proof}
  Proof of (1). For any \(\pi \in \mathfrak{m} \backslash \{0\}\), we may consider the Quot scheme
  \(\mathcal{Q}_\pi=\mathrm{Quot}_{\mathcal{F}_\pi/\mathcal{X}_\pi/\mathcal{O}_\pi}\) 
  \footnote{For a discussion about Quot schemes, one may consult~\cite[Section 2.2]{Huybrechts-Lehn}, \cite{Nit}
   and~\cite[\href{https://stacks.math.columbia.edu/tag/09TQ}{Section 09TQ}]{stacks-project}.}.
  They are locally finitely presented schemes over \(\mathrm{Spec}(\mathcal{O}/(\pi))\) satisfying
  base change proper: \(\mathcal{Q}_{\pi} \otimes_{\mathcal{O}/(\pi)} \mathcal{O}/\pi' = \mathcal{Q}_{\pi'}\)
  whenever \(0 < |\pi| < |\pi'| < 1\). Hence they form a locally finitely presented formal scheme
  \(\mathcal{Q}=\mathrm{Quot}_{\mathcal{F}/\mathcal{X}/\mathcal{O}}\). 
  The special fibre \(\mathcal{Q}_0 = \mathcal{Q} \otimes_{\mathcal{O}} k\) of \(\mathcal{Q}\) is
  the Quot scheme \(\mathrm{Quot}_{\mathcal{F}_0/\mathcal{X}_0/\mathcal{O}_0}\). Let us denote the point
  corresponding to \(\mathcal{F}_0 \twoheadrightarrow \mathcal{G}_0\) by \(P \in \mathcal{Q}_0(k)\).
  By~\cite[Proposition 2.2.7]{Huybrechts-Lehn}, we know that our condition implies the tangent space
  \(T_{P}\mathcal{Q}_0\) is zero dimensional. 
  Hence we know the component of \(\mathcal{Q}\) corresponding to \(P\) is of the form 
  \(\mathcal{Q}_p=\Spec(\mathcal{O}/(\pi))\) which means
  exactly that our sequence is liftable modulo \(\pi\) but not modulo an element
  with bigger valuation.

  Proof of (2). We consider \(\pi\mathcal{F}^{(1)} \subset \pi\mathcal{F}
  \subset \mathcal{F}^{(1)}\), whose corresponding quotients give us a short
  exact sequence
  \[
    0 \to \mathcal{G}=\pi\mathcal{F}/\pi\mathcal{F}^{(1)} \to
    \mathcal{F}^{(1)}\otimes
    \mathcal{O}/(\pi)=\mathcal{F}^{(1)}/\pi\mathcal{F}^{(1)} \to
    \mathcal{B}=\mathcal{F}^{(1)}/\pi\mathcal{F} \to 0
  \]
  which is what we want, and it is immediate that
  \(\mathcal{F}/\mathcal{F}^{(1)}\) is \(\pi\)-torsion.

  Proof of (3). Suppose on the contrary that the sequence \(0 \to \mathcal{G}_0
  \to \mathcal{F}^{(1)}_0 \to \mathcal{B}_0 \to 0\) splits, so that we have 
  \(\mathcal{F}^{(1)}_0 \simeq \mathcal{G}_0 \oplus \mathcal{B}_0\) and we may view
  \(\mathcal{B}_0\) as a subsheaf in \(\mathcal{F}^{(1)}_0\). Because the maximal ideal
   \(\mathfrak{m}\) is the colimit of principal sub-ideals \((\pi')\),
  by a limit
  argument we have \(0 \to \mathcal{B}' \to
  \mathcal{F}^{(1)}/\pi'\mathcal{F}^{(1)} \to \mathcal{G}' \to 0\) for some
  \(\pi' \in \mathfrak{m}\) where \(\mathcal{G}'\) is flat over
  \(\mathcal{O}/\pi'\), and we may choose \(\pi'\) so that the composite
  \(\pi\mathcal{F} \hookrightarrow \mathcal{F}^{(1)} \twoheadrightarrow
  \mathcal{G}'\) is surjective. Then we consider \(\tilde{\mathcal{B}}\) which
  is defined by the following exact sequences
    \[
      \xymatrix{ 0 \ar[r] & \tilde{\mathcal{B}} \ar[d] \ar[r] &
        \mathcal{F}^{(1)}/\pi\pi'\mathcal{F} \ar[d] \ar[r] & \mathcal{G}' \ar[d]
        \ar[r] & 0 \\
        0 \ar[r] & \mathcal{B}' \ar[r] & \mathcal{F}^{(1)}/\pi'\mathcal{F}^{(1)}
        \ar[r] & \mathcal{G}' \ar[r] & 0 }
    \]
    We claim that the exact sequence
    \[ 0 \to \tilde{\mathcal{B}} \to \mathcal{F}/\pi\pi'\mathcal{F} \to
      \tilde{\mathcal{G}} \to 0
    \]
    is flat over \(\mathcal{O}/\pi\pi'\), which would contradict the maximality
    of (the valuation of) \(\pi\). Indeed, by considering \(\tilde{\mathcal{B}}
    \subset \mathcal{F}^{(1)}/\pi\pi'\mathcal{F} \subset
    \mathcal{F}/\pi\pi'\mathcal{F}\), we get \(0 \to \mathcal{G}' \to
    \tilde{\mathcal{G}} \to \mathcal{G} \to 0\). To prove
    \(\tilde{\mathcal{G}}\) is flat over \(\mathcal{O}/\pi\pi'\) it suffices to
    show that the map \(\tilde{\mathcal{G}} \to \mathcal{G}\) is the same as
    tensoring \(\mathcal{O}/\pi\), which in turn is equivalent to saying
    \(\tilde{\mathcal{B}} + \pi\mathcal{F}/\pi\pi'\mathcal{F} =
    \mathcal{F}^{(1)}/\pi\pi'\mathcal{F}\), and this equality follows from the
    fact that the composite \(\pi\mathcal{F} \hookrightarrow \mathcal{F}^{(1)}
    \to \mathcal{G}'\) is surjective.
\end{proof}

The key result of this section is the following theorem which generalizes
Langton's result in~\cite{Langton}. This theorem and the proof of it is very much
inspired by~\cite[Section 2.B]{Huybrechts-Lehn}.

\begin{thm}
  Let \(X\) be a rigid space with a formal model \(\mathcal{X}\). Assume
  \(\mathcal{X}_0\) is projective with a fixed ample invertible sheaf \(H\).
\label{semistable reduction}  
    \leavevmode
  \begin{enumerate}
  \item Let \(F\) be a pure coherent sheaf on \(X\). Then there exists a formal
    model \(\mathcal{F}\) on \(\mathcal{X}\) with reduction \(\mathcal{F}_0\)
    pure on \(\mathcal{X}_0\). Conversely, if \(F\) admits a formal model
    \(\mathcal{F}\) with pure reduction, then \(F\) itself is pure.
  \item Let \(F\) be a semistable coherent sheaf on \(X\). Then there exists a
    formal model \(\mathcal{F}\) on \(\mathcal{X}\) with reduction
    \(\mathcal{F}_0\) semistable on \(\mathcal{X}_0\). Conversely, if \(F\)
    admits a formal model \(\mathcal{F}\) with semistable reduction, then \(F\)
    itself is semistable.
  \end{enumerate}
\end{thm}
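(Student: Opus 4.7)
My plan is to handle the converse directions of (1) and (2) via the Hilbert polynomial specialization (Theorem \ref{well-defined}) combined with Bosch--L\"{u}tkebohmert saturation, and to handle the forward directions by iteratively applying the elementary modification of Lemma \ref{flip lemma}, in the spirit of Langton.

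For the converse of either part, I would suppose \(\mathcal{F}_0\) is pure (respectively semistable). Given any nonzero coherent subsheaf \(F' \hookrightarrow F\), its saturation \(\mathcal{F}' := F' \cap \mathcal{F}\) is a finitely presented \(\mathcal{O}\)-flat subsheaf of \(\mathcal{F}\) with \(\mathcal{O}\)-flat quotient (as in the final paragraph of the proof of Theorem \ref{well-defined}). Reducing modulo \(\mathfrak{m}\) produces an injection \(\mathcal{F}'_0 \hookrightarrow \mathcal{F}_0\), and Theorem \ref{well-defined} gives \(P(F') = P(\mathcal{F}'_0)\) and \(P(F) = P(\mathcal{F}_0)\). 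Purity (respectively the Hilbert polynomial inequality) of \(\mathcal{F}_0\) therefore transfers directly to \(F\).

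For the forward direction of (1), I would start with an arbitrary formal model \(\mathcal{F}\). If \(\mathcal{F}_0\) is not pure, let \(B_0 \subset \mathcal{F}_0\) be the maximal subsheaf of dimension \(< n := \dim \mathcal{F}_0\), and set \(G_0 := \mathcal{F}_0 / B_0\), which is pure of dimension \(n\). Purity of \(G_0\) forces \(\Hom(B_0, G_0) = 0\), so Lemma \ref{flip lemma} applies. The case \(\pi = 0\) is ruled out: it would produce on the generic fiber a nonzero injection \(B \hookrightarrow F\) with \(\dim B = \dim B_0 < n\), contradicting purity of \(F\). Hence \(\pi \neq 0\) and I obtain \(\mathcal{F}^{(1)}\) whose reduction fits into a non-split \(0 \to G_0 \to \mathcal{F}^{(1)}_0 \to B_0 \to 0\). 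The new maximal torsion \(B^{(1)}_0 \subset \mathcal{F}^{(1)}_0\) injects into \(B_0\) via the quotient map (since \(G_0\) is pure) and cannot equal \(B_0\) (else the sequence would split), so \(B^{(1)}_0 \subsetneq B_0\). The strategy for (2) is parallel: after first using (1) to assume \(\mathcal{F}_0\) is pure, if it is not semistable I take \(B_0\) to be the maximal destabilizing subsheaf from the Harder--Narasimhan filtration; the standard HN identity gives \(\Hom(B_0, \mathcal{F}_0/B_0) = 0\), and the \(\pi = 0\) case is ruled out because it would produce \(B \hookrightarrow F\) with \(p(B) = p(B_0) > p(F)\), contradicting semistability.

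The main obstacle will be termination of this iterative flip procedure. In (1) it produces a strictly descending chain \(B_0 \supsetneq B^{(1)}_0 \supsetneq \cdots\) of coherent subsheaves of \(B_0\), which a priori need not terminate since coherent sheaves on Noetherian schemes do not satisfy DCC on subsheaves; in (2) it produces a sequence of reductions whose Harder--Narasimhan polygon weakly decreases and strictly decreases at each nontrivial flip. To force termination I would follow Langton: use boundedness of the family of candidate maximal torsion (respectively maximal destabilizing) subsheaves of sheaves with bounded Hilbert polynomial on the projective scheme \(\mathcal{X}_0\) --- which is parametrized by a quasi-compact formal Quot scheme --- to reduce to the case where the numerical invariants (the Hilbert polynomial of \(B^{(i)}_0\), or the HN polygon) stabilize, and then imitate Langton's limit construction (the analogue of \(\bigcap_i \mathcal{F}^{(i)}\) in the DVR case) to manufacture a subsheaf of the rigid generic fiber \(F\) violating purity or semistability, a contradiction. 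Adapting Langton's limit argument to our non-Noetherian, non-discretely-valued formal setting is the most delicate step.
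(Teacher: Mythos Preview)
Your overall architecture matches the paper's: the converse directions via saturation \(\mathcal{F}' = F' \cap \mathcal{F}\) are exactly what the paper does, and the forward direction via iterated elementary modifications from Lemma~\ref{flip lemma} is also the same. Your observation that \(\mathcal{B}^{(i+1)}_0\) injects strictly into \(\mathcal{B}^{(i)}_0\) is correct and is implicit in the paper as well.

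The substantive divergence is in the termination argument, and here you are proposing the harder of two routes. You plan to push the problem back up to the formal model and the generic fibre, via boundedness and a Langton-style limit \(\bigcap_i \mathcal{F}^{(i)}\); you correctly flag that adapting this to a non-discretely-valued \(\mathcal{O}\) is delicate, since the \(\pi\)'s produced at successive steps need not be commensurable and the intersection need not be finitely presented. The paper avoids this entirely by working \emph{only on the special fibre}. Assuming the dimension and rank of the \(\mathcal{B}^{(i)}_0\)'s never drop, one has not only the decreasing chain \(\mathcal{B}^{(0)}_0 \supsetneq \mathcal{B}^{(1)}_0 \supsetneq \cdots\) you noted, but also a dual strictly increasing chain \(\mathcal{G}^{(0)}_0 \subsetneq \mathcal{G}^{(1)}_0 \subsetneq \cdots\) (strictness comes from the same non-splitting in Lemma~\ref{flip lemma}(3), together with \(P(\mathcal{G}^{(i+1)}_0)-P(\mathcal{G}^{(i)}_0)=P(\mathcal{B}^{(i)}_0)-P(\mathcal{B}^{(i+1)}_0)\)). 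Because the \(\mathcal{B}^{(i)}_0\)'s all have the same dimension and rank, the successive quotients \(\mathcal{G}^{(i+1)}_0/\mathcal{G}^{(i)}_0\) are supported in codimension \(\geq 2\), so all the \(\mathcal{G}^{(i)}_0\)'s share a common reflexive hull \(\mathcal{G}_0^{DD}\). An infinite strictly increasing chain inside the Noetherian sheaf \(\mathcal{G}_0^{DD}\) is impossible. The semistable case runs the same way, with the \(a_{k+1}\)-invariant playing the role of rank. This replaces your proposed limit construction by a one-line ACC argument on \(\mathcal{X}_0\), which is the key simplification that makes the proof go through uniformly over any non-archimedean base.
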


Before giving the proof, let us say some informal idea of the proof here.
We want to construct a formal model of our coherent sheaf so that its special fibre is as nice as the generic fibre.
The idea is to start with an arbitrary formal model and keep modifying it so that 
at each step the special fibre becomes better with respect to the properties we want it to have. 
This is achieved by using Lemma~\ref{flip lemma} above. 
Lastly we have to show that this process must terminate in finitely many steps. 
The key technique we use to achieve this is provided by the duality theory, 
especially the theory of reflexive hulls (c.f.~\cite[Section 1.1]{Huybrechts-Lehn}).

\begin{proof}
  Proof of (1). We choose an arbitrary formal model \(\mathcal{F}\) on
  \(\mathcal{X}\); by Lemma~\ref{formal-rigid pure} we know \(\mathcal{F}\) is
  pure. Now we want to find a finitely presented subsheaf \(\mathcal{F}' \subset
  \mathcal{F}\) with torsion quotient and special fiber \(\mathcal{F}'_0\) pure
  on \(\mathcal{X}_0\).

  Let \(\mathcal{B}_0 \subset \mathcal{F}_0\) be the maximal coherent subsheaf
  whose support is not of dimension \(\dim(F)\) (from now on we will call it the
  maximal torsion subsheaf of \(\mathcal{F}_0\)) and denote the quotient by
  \(\mathcal{G}_0\). It is easy to see that
  \(\Hom(\mathcal{B}_0,\mathcal{G}_0)=0\), so applying Lemma~\ref{flip lemma} to
  the short exact sequence \(0 \to \mathcal{B}_0 \to \mathcal{F}_0 \to
  \mathcal{G}_0 \to 0\) gives a finitely presented subsheaf \(\mathcal{F}^{(1)}
  \subset \mathcal{F}\) and a short exact sequence \(0 \to \mathcal{G}_0 \to
  \mathcal{F}^{(1)}_0 \to \mathcal{B}_0 \to 0\). After repeatedly applying the
  above procedure, we will obtain a sequence of finitely presented sheaves
  \[\mathcal{F}=\mathcal{F}^{(0)} \supset \mathcal{F}^{(1)} \supset \cdots.
  \]
  Note that all of the \(\mathcal{F}^{(i)}_0\)'s have the same Hilbert
  polynomial. We get short exact sequences
  \[ 0 \to \mathcal{B}^{(i)}_0 \to \mathcal{F}^{(i)}_0 \to \mathcal{G}^{(i)}_0
    \to 0
  \]
  and
  \[
    0 \to \mathcal{G}^{(i)}_0 \to \mathcal{F}^{(i+1)}_0 \to \mathcal{B}^{(i)}_0
    \to 0.
  \]
  We claim that after finitely many steps the maximal torsion subsheaf of
  \(\mathcal{F}^{(i)}_0\), denoted by \(\mathcal{B}^{(i)}_0\), will have either
  dimension or rank smaller than that of \(\mathcal{B}_0\). According to this
  claim, after \(N\) steps, \(\mathcal{B}^{(N)}_0\) will be 0. This implies
  \(\mathcal{F}^{(N)}_0\) contains no maximal torsion subsheaf, hence is pure.

  Suppose otherwise. Then because \(\mathcal{G}^{i}_0 \cap
  \mathcal{B}^{(i+1)}_0=0\), we have an infinite chain of inclusions
  \[\mathcal{G}=\mathcal{G}^{(0)}_0 \hookrightarrow \mathcal{G}^{(1)}_0
    \hookrightarrow \cdots
  \]
  and
  \[\mathcal{B}=\mathcal{B}^{(0)}_0 \hookleftarrow \mathcal{B}^{(1)}_0
    \hookleftarrow \cdots.
  \]
  We are assuming all of the \(\mathcal{B}^{(i)}_0\)'s have the same dimension
  and rank as those of \(\mathcal{B}_0\).
  
  By Lemma~\ref{flip lemma} (3), all the inclusions above are not isomorphisms.
  We notice that \(P_H(\mathcal{G}^{(i+1)}_0)-P_H(\mathcal{G}^{(i)}_0) =
  P_H(\mathcal{B}^{(i)}_0)-P_H(\mathcal{B}^{(i+1)}_0)\) and the dimension of
  \(\mathcal{B}^{(i)}_0\) is smaller than the dimension of
  \(\mathcal{G}^{(i)}_0\). Hence by our assumptions, the
  \(\mathcal{G}^{(i)}_0\)'s only differ in codimension \(\geq 2\), therefore
  they have the same reflexive hull \(\mathcal{G}_0^{DD}\). Viewing
  \(\mathcal{G}^{(i)}_0\) as an infinite increasing chain of subsheaves in
  \(\mathcal{G}_0^{DD}\), we find a contradiction with the Noetherianness of
  \(\mathcal{G}_0^{DD}\).

  The converse part is easy. Suppose \(G \subset F\) is a coherent subsheaf with
  support of lower dimension than \(\dim(F)\). Then \(\mathcal{G} := G \cap
  \mathcal{F}\) gives a contradiction to the assumption that \(\mathcal{F}_0\)
  is pure.

  Proof of (2). By the first part of our theorem and
  Lemma~\ref{formal-rigidequivalence}, we can choose a formal model
  \(\mathcal{F}\) such that it is semistable and its special fiber
  \(\mathcal{F}_0\) is pure on \(\mathcal{X}_0\). Now we want to find a finitely
  presented subsheaf \(\mathcal{F}' \subset \mathcal{F}\) with torsion quotient
  and special fiber \(\mathcal{F}'_0\) semistable on \(\mathcal{X}_0\). Starting
  from \(\mathcal{F}=\mathcal{F}^{(0)}\), we will do induction on the semistable
  codimension of \(\mathcal{F}_0\) which we denoted as \(k(\mathcal{F}_0)\).

  We denote the maximal destabilizing sheaf of codimension
  \(k(\mathcal{F}_0)+1\) in \(\mathcal{F}_0\) by \(\mathcal{B}_0\), and the
  quotient by \(\mathcal{G}_0\). It is easy to see that
  \(\Hom(\mathcal{B}_0,\mathcal{G}_0)=0\). Then applying Lemma~\ref{flip lemma}
  to the short exact sequence \(0 \to \mathcal{B}_0 \to \mathcal{F}_0 \to
  \mathcal{G}_0 \to 0\), we get a finitely presented subsheaf
  \(\mathcal{F}^{(1)} \subset \mathcal{F}\) and a short exact sequence \(0 \to
  \mathcal{G}_0 \to \mathcal{F}^{(1)}_0 \to \mathcal{B}_0 \to 0\). It is easy to
  see that \(k(\mathcal{F}^{(1)}_0) \geq k(\mathcal{F}_0)\). After repeatedly
  applying the above procedure we will obtain a sequence of finitely presented
  sheaves
  \[\mathcal{F}=\mathcal{F}^{(0)} \supset \mathcal{F}^{(1)} \supset \cdots.
  \]
  Moreover, we will get short exact sequences
  \[ 0 \to \mathcal{B}^{(i)}_0 \to \mathcal{F}^{(i)}_0 \to \mathcal{G}^{(i)}_0
    \to 0
  \]
  and
  \[
    0 \to \mathcal{G}^{(i)}_0 \to \mathcal{F}^{(i+1)}_0 \to \mathcal{B}^{(i)}_0
    \to 0.
  \]
  We claim that after repeating the above procedure finitely many times
  \(\mathcal{B}^{(i)}_0\), the maximal destabilizing sheaf of codimension
  \(k(\mathcal{F}_0)+1\), will have either rank or \(a_{k+1}\) smaller than that
  of \(\mathcal{B}_0\). Recall that rank takes values in
  \(\frac{\mathbb{N}}{n!}\) and \(a_{k+1}\) takes values in
  \(\frac{\mathbb{Z}}{n!}\). Hence according to our claim, after \(N\) steps,
  \(a_{k+1}(\mathcal{B}^{(N)}_0)=a_{k+1}(\mathcal{F}^{(N)})\). This implies
  \(\mathcal{F}^{(N)}_0\) is semistable of codimension at least
  \(k(\mathcal{F}_0)+1\). By induction we would be done.

  Now we prove our claim. Suppose all of the \(\mathcal{B}^{(i)}_0\)'s have
  non-decreasing rank and \(a_{k+1}\). In that situation, by considering the
  injection \(\frac{\mathcal{B}^{(i+1)}_0}{\mathcal{G}^{i}_0 \cap
    \mathcal{B}^{(i+1)}_0} \hookrightarrow \mathcal{B}^{(i)}_0\) we see that
  \(\mathcal{G}^{i}_0 \cap \mathcal{B}^{(i+1)}_0=0\). Therefore we may assume
  all of the \(\mathcal{B}^{(i)}_0\)'s have the same rank and \(a_{k+1}\) as
  those of \(\mathcal{B}_0\). Hence we will get an infinite chain of inclusions
  \[\mathcal{G}=\mathcal{G}^{(0)}_0 \hookrightarrow \mathcal{G}^{(1)}_0
    \hookrightarrow \cdots
  \]
  and
  \[
    \mathcal{B}=\mathcal{B}^{(0)}_0 \hookleftarrow \mathcal{B}^{(1)}_0
    \hookleftarrow \cdots.
  \]

  By Lemma~\ref{flip lemma} (3), none of the inclusions above are isomorphisms.
  We note that \(P_H(\mathcal{G}^{(i+1)}_0)-P_H(\mathcal{G}^{(i)}_0) =
  P_H(\mathcal{B}^{(i)}_0)-P_H(\mathcal{B}^{(i+1)}_0)\) and by our assumption
  the \(\mathcal{B}^{(i)}_0\)'s only differ in codimension \(\geq 2\). Hence we
  see that the \(\mathcal{G}^{(i)}_0\)'s also only differ in codimension \(\geq
  2\), therefore they have the same reflexive hull \(\mathcal{G}_0^{DD}\).
  Viewing \(\mathcal{G}^{(i)}_0\) as an infinite increasing chain of subsheaves
  in \(\mathcal{G}_0^{DD}\), we find a contradiction with the Noetherianness of
  \(\mathcal{G}_0^{DD}\).
  
  The converse direction is the same as the proof of the first part.
\end{proof}

\begin{thm}
\label{S-equivalence theorem}
  In the situation above, if \(\mathcal{F}^1\) and \(\mathcal{F}^2\) are two
  formal models with semistable reductions, then their reductions
  \(\mathcal{F}^1_0\) and \(\mathcal{F}^2_0\) are S-equivalent.
\end{thm}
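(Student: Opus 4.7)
The plan is to deduce the theorem directly from Remark~\ref{S-equivalence}, which already produces, for any two formal models $\mathcal{F}^1, \mathcal{F}^2$ of a common rigid sheaf $F$, canonical filtrations $\Fil^\bullet(\mathcal{F}^1_0)$ (decreasing) and $\Fil^\bullet(\mathcal{F}^2_0)$ (increasing) together with comparison maps $\phi^i$ whose restriction to graded pieces induces isomorphisms $\Gr^i(\mathcal{F}^1_0) \xrightarrow{\sim} \Gr^i(\mathcal{F}^2_0)$ for every index $i$. Theorem~\ref{well-defined} further forces $[\mathcal{F}^1_0] = [\mathcal{F}^2_0]$ in $K_0(\mathcal{X}_0)$, so the two reductions share a common Hilbert polynomial with reduced polynomial $p$; write $G_i$ for the common graded piece.

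The heart of the proof is to verify that each $G_i$ is a semistable sheaf on $\mathcal{X}_0$ with reduced Hilbert polynomial $p$. Writing the increasing filtration of $\mathcal{F}^2_0$ as $0 = V_{-1} \subset V_0 \subset \cdots \subset V_m = \mathcal{F}^2_0$ with $V_i / V_{i-1} \cong G_i$, semistability of $\mathcal{F}^2_0$ gives $p(V_i) \leq p$ for all $i$; the analogous argument on the $\mathcal{F}^1_0$ side, transported through the comparison maps $\phi^i$, yields the complementary inequality $p(\mathcal{F}^2_0 / V_i) \geq p$. Pureness of both reductions (established in Theorem~\ref{semistable reduction}) ensures that the intermediate quotients are of full dimension, and additivity of the Hilbert polynomial then pins down $p(V_i) = p$ throughout. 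Each $G_i$ is thus a subquotient of the semistable sheaf $\mathcal{F}^2_0$ of matching reduced polynomial, hence semistable itself.

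Finally, I invoke the standard fact from the theory of semistable sheaves (see e.g.\ \cite{Huybrechts-Lehn}) that whenever a semistable sheaf admits a filtration whose successive quotients are semistable of the same reduced Hilbert polynomial, its Jordan--H\"older graded decomposes as the direct sum of the Jordan--H\"older gradeds of these quotients. Applying this on both sides and using $\Gr^i(\mathcal{F}^1_0) \cong \Gr^i(\mathcal{F}^2_0)$ yields
\[
\mathrm{gr}^{JH}(\mathcal{F}^1_0) \;\cong\; \bigoplus_i \mathrm{gr}^{JH}(G_i) \;\cong\; \mathrm{gr}^{JH}(\mathcal{F}^2_0),
\]
which is precisely the asserted S-equivalence.

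The main obstacle I anticipate is the middle step: although Remark~\ref{S-equivalence} supplies the comparison of graded pieces cheaply, propagating semistability of $\mathcal{F}^1_0$ and $\mathcal{F}^2_0$ through the maps $\phi^i$ to force every intermediate $p(V_i)$ to equal $p$ requires careful bookkeeping of how the $\mathcal{O}$-torsion structure of the quotient between the two lattices inside $F$ interacts with the ambient Harder--Narasimhan theory on $\mathcal{X}_0$.
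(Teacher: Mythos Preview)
Your strategy matches the paper's: invoke Remark~\ref{S-equivalence} to obtain the two filtrations with isomorphic graded pieces $G_i$, use semistability on both sides to force each $G_i$ to be semistable with reduced polynomial $p$, and conclude via Jordan--H\"older. The paper packages the middle step as an induction along the four-term exact sequence
\[
0 \to \Fil^{i+1}(\mathcal{F}^1_0) \to \Fil^i(\mathcal{F}^1_0) \xrightarrow{\phi^i} \mathcal{F}^2_0/\Fil^i(\mathcal{F}^2_0) \to \mathcal{F}^2_0/\Fil^{i+1}(\mathcal{F}^2_0) \to 0,
\]
noting that the image of $\phi^i$ is simultaneously a quotient of the (inductively) semistable $\Fil^i(\mathcal{F}^1_0)$ and a subsheaf of the (inductively) semistable $\mathcal{F}^2_0/\Fil^i(\mathcal{F}^2_0)$, hence has reduced polynomial exactly $p$.

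There is, however, a real slip in your version of the middle step. The two inequalities you record, $p(V_i)\le p$ and $p(\mathcal{F}^2_0/V_i)\ge p$, are \emph{both} consequences of the semistability of $\mathcal{F}^2_0$ alone (the second is just the quotient form of the first), and additivity of Hilbert polynomials does \emph{not} pin down $p(V_i)=p$ from this pair: every proper saturated subsheaf of a semistable sheaf satisfies both without equality being forced. What the $\mathcal{F}^1_0$ side actually buys you, once transported through the graded isomorphisms, is the \emph{reverse} inequality $p(V_i)\ge p$: summing the identities $P(G_j)=P(\Gr^j(\mathcal{F}^1_0))$ shows that $P(V_i)$ coincides with the Hilbert polynomial of a quotient of $\mathcal{F}^1_0$ by one of its filtration steps, and quotients of the semistable sheaf $\mathcal{F}^1_0$ have reduced polynomial $\ge p$. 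With that correction your argument goes through and is the same as the paper's.
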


\begin{proof}
  Suppose \(\mathcal{F}^1\) and \(\mathcal{F}^2\) are two formal models with
  semistable reductions. Then by Remark~\ref{S-equivalence} we see that there is
  a decreasing filtration on \(\mathcal{F}^1_0\) and an increasing filtration on
  \(\mathcal{F}^2_0\), along with exact sequences
  \[
    0 \to \Fil^{i+1}(\mathcal{F}^1_0) \to \Fil^i(\mathcal{F}^1_0) \to
    \frac{\mathcal{F}^2_0}{\Fil^i(\mathcal{F}^2_0)} \to
    \frac{\mathcal{F}^2_0}{\Fil^{i+1}(\mathcal{F}^2_0)} \to 0.
  \]
  Because both \(\mathcal{F}^1_0\) and \(\mathcal{F}^2_0\) are semistable with
  the same Hilbert polynomial, by induction on the number of filtrations we see
  that \(\Fil^i(\mathcal{F}^1_0)\), \(\mathcal{F}^2_0/\Fil^i{\mathcal{F}^2_0}\),
  \(\Gr^i(\mathcal{F}^1_0)\) and \(\Gr^i(\mathcal{F}^2_0)\) are all semistable
  with the same Hilbert polynomial. Now by Remark~\ref{S-equivalence} we have
  isomorphisms between \(\Gr^i(\mathcal{F}^1_0)\) and
  \(\Gr^i(\mathcal{F}^2_0)\). Putting the above together, we see that
  \(\mathcal{F}^1_0\) and \(\mathcal{F}^2_0\) are S-equivalent.
\end{proof}

Similarly one can prove the following theorem, which generalizes Langton's
theorem to the case of arbitrary valuations.

\begin{thm}
  Let \(X \to \Spec(R)\) be a flat projective finitely presented scheme over a
  valuation ring \(R\). Let \(F_K\) be a semistable sheaf on \(X_K\). Then there
  exists a coherent sheaf \(F\) on \(X\) with generic fiber \(F_K\) and special
  fiber \(F_0\) semistable on \(X_0\). Moreover any two such coherent sheaves
  have S-equivalent special fibers.
\end{thm}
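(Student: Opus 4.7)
The plan is to run the proof of Theorem~\ref{semistable reduction} and Theorem~\ref{S-equivalence theorem} essentially verbatim, replacing the admissible formal scheme $\mathcal{X}/\mathcal{O}$ by the flat projective finitely presented scheme $X/\Spec(R)$, and adapting each intermediate lemma to the algebraic setting over a (not necessarily discretely valued, not necessarily complete) valuation ring $R$. As a first step I would record the algebraic analogs of Lemma~\ref{torsion structure} (using Noether normalization over $R$ to reduce to $R[x_1,\ldots,x_d]$, then localizing at the generic point $\mathfrak{m}[x_1,\ldots,x_d]$ of the special fiber, where the local ring is a valuation ring with the same value group as $R$) and of Theorem~\ref{well-defined} (yielding a specialization $K_0(X_K)\to K_0(X_0)$), which in turn define Hilbert polynomials with respect to a fixed relatively ample $H$ on $X/R$.

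Next I would establish the analog of Lemma~\ref{flip lemma} for $R$-flat coherent sheaves. Given an $R$-flat coherent $F$ on $X$ with generic fiber $F_K$ and a short exact sequence $0\to B_0\to F_0\to G_0\to 0$ on $X_0$ with $\Hom(B_0,G_0)=0$, the point of $\mathrm{Quot}_{F/X/R}$ parametrizing $G_0$ is isolated in the special fiber; since the Quot scheme is locally of finite presentation over $R$ and $R$ is a valuation ring, the local structure at this point forces the component to be $\Spec(R/(\pi))$ for a largest $\pi\in\mathfrak{m}$ (with $\pi=0$ meaning the sequence lifts all the way to $R$). Then $F^{(1)}:=\ker(F\to F\otimes R/(\pi)\to G)$ is $R$-flat coherent with $F/F^{(1)}$ $\pi$-torsion, and the non-splitting of the flipped sequence $0\to G_0\to F^{(1)}_0\to B_0\to 0$ is proved by the same limit/diagram chase the author uses: a splitting would spread out over some $R/(\pi')$ and produce an extension of the quotient over $R/(\pi\pi')$, contradicting maximality of $\pi$.

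With these tools in place, the existence part of the theorem follows by the same two-step argument: first replace an arbitrary $R$-flat extension $F$ of $F_K$ by one whose reduction is pure (using Lemma~\ref{formal-rigid pure} for $X/R$), then iterate the flip procedure on the maximal destabilizing subsheaf of codimension $k(F_0)+1$, showing that after finitely many flips either $\mathrm{rk}$ or $a_{k+1}$ of $\mathcal{B}^{(i)}_0$ strictly decreases. The infinite-chain contradiction is identical to the author's: the chain $\mathcal{G}^{(0)}_0\hookrightarrow\mathcal{G}^{(1)}_0\hookrightarrow\cdots$ stabilizes inside the common reflexive hull by the Noetherianness of $X_0$. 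Induction on $k$ terminates because rank lies in $\frac{1}{n!}\mathbb{N}$ and $a_{k+1}$ in $\frac{1}{n!}\mathbb{Z}$, both discrete invariants, and this is the only place where discreteness of $R$ played a role in the original argument, so no new input is needed here.

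For the S-equivalence statement, given two extensions $F^1, F^2$ with semistable special fibers, I would observe that the proof of Theorem~\ref{well-defined} (equivalently Remark~\ref{S-equivalence}) goes through verbatim in the algebraic setting, producing a decreasing filtration on $F^1_0$ and an increasing filtration on $F^2_0$ with naturally isomorphic successive quotients; then Theorem~\ref{S-equivalence theorem}'s argument applies word-for-word to conclude that the two special fibers are S-equivalent. The main obstacle I anticipate is confirming representability and finite presentation of the Quot scheme over the possibly non-Noetherian $R$, for which one can appeal to the classical Altman--Kleiman generalization; once this is granted, the rest of the translation is mechanical.
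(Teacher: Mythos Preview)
Your proposal follows essentially the same route as the paper, which likewise says the argument transcribes verbatim from Theorems~\ref{semistable reduction} and~\ref{S-equivalence theorem}. The one difference lies in what is flagged as the non-trivial input: you single out representability and finite presentation of the Quot scheme over a non-Noetherian base, whereas the paper's one-sentence proof instead cites the fact that a finitely presented algebra over a valuation ring is always coherent (\cite[Theorem 7.3.3]{Coherent}).

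That coherence result is actually used implicitly at several places in your sketch where you did not pause. Most visibly, when you assert that $F^{(1)}=\ker\bigl(F\to F\otimes R/(\pi)\to G\bigr)$ is again coherent: over a non-Noetherian ring the kernel of a map of finitely presented modules need not be finitely presented, and it is precisely coherence of the structure sheaf that makes the category of finitely presented sheaves on $X$ abelian, so that the Langton iteration (kernels, saturations, intersections, successive $F^{(i)}$) stays inside it. The same issue underlies your analog of Lemma~\ref{torsion structure} and the existence of an initial $R$-flat finitely presented extension of $F_K$. Your Quot-scheme concern is legitimate and Altman--Kleiman does handle it, but the paper regards coherence as the one genuinely new ingredient; once it is in hand, everything else, including the Quot argument, goes through mechanically.
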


\begin{proof}
  The proof is almost the same, the only subtlety being that a finitely
  presented algebra over a valuation ring is always coherent
  (c.f.~\cite[Theorem 7.3.3]{Coherent}).
\end{proof}

\begin{remark}
  One can use the above method to prove other semistable reduction type
  theorems. For example, semistable reduction for multi-filtered vector spaces
  or quiver representations. The former case has been worked out by the author
  in~\cite{multi-filtered}. There is a slight subtlety as the category of
  multi-filtered vector spaces is not abelian.
\end{remark}

\begin{remark}
  Every line bundle is automatically stable, so by Theorem~\ref{semistable
    reduction} one can always find a formal model of a given line bundle with a
  semistable reduction. This will be used in the last section to prove the main
  theorem.
\end{remark}

% * Proof
\section{Proof of the Main Theorem}

% ** Auxiliary space
\subsection{The Auxiliary Space \(W\)}
\label{chosen}

Assume \(\mathcal{X}_0\) is a projective variety over \(k\) and fix an ample
invertible sheaf \(H\). Let \(P\) be the Hilbert polynomial of
\(\mathcal{O}_{\mathcal{X}_0}\). Then for every \(\gamma \in \Gamma\) we denote
the moduli stack of finitely presented sheaves with semistable geometric fiber
of Hilbert polynomial \(P\) on \(\mathcal{X}_{\gamma}/\mathcal{O}_{\gamma}\) by
\(\mathcal{M}_{\gamma}\). The existence of such a stack will be justified in the
following, as we do not assume
\(\mathcal{X}_{\gamma}/\Spec(\mathcal{O}_{\gamma})\) to be projective.
Nevertheless we know that \(\mathcal{M}_0\), the algebraic stack of semistable
coherent sheaves of Hilbert polynomial \(P\), is an open substack of
\(|Coh_{\mathcal{X}_0/\Spec(k)}|=|Coh_{\mathcal{X}_{\gamma}/\Spec(\mathcal{O}_{\gamma})}|\).
Therefore we can define \(\mathcal{M}_{\gamma}\) to be the corresponding open
substack in \(Coh_{\mathcal{X}_{\gamma}/\Spec(\mathcal{O}_{\gamma})}\). For
general theory concerning the stack of coherent sheaves, we refer
to~\cite[\href{https://stacks.math.columbia.edu/tag/09DS}{Tag
  09DS}]{stacks-project}.

By the description of the functors we know that \(\mathcal{M}_{\gamma}
\times_{\mathcal{O}_{\gamma}} \mathcal{O}_{\gamma'}=\mathcal{M}_{\gamma'}\) for
any \(\gamma' \leq \gamma\), so the \(\mathcal{M}_{\gamma}\)'s form a projective
system of algebraic stacks. Langer has shown that \(\mathcal{M}_0\) is a
quasi-compact algebraic stack (c.f.~\cite[Theorem 4.4]{Langer}). In particular,
for a chosen pseudo-uniformizer \(\pi \in \mathfrak{m}\) we can find a smooth
surjection \(\mathcal{W}_{\pi} \to \mathcal{M}_{\pi}\) where
\(\mathcal{W}_{\pi}=\Spec(A_1)\) is an affine scheme of finite presentation over
\(\mathcal{O}_{\pi}\). The following is a result of Emerton which is the key of
this subsection.

\begin{prop}[Emerton,~{{\cite[\href{http://stacks.math.columbia.edu/tag/0CKI}{Tag
    0CKI}]{stacks-project}}}]
\label{proposition-affine-smooth-lift-to-first-order}
Let $\mathcal{X} \subset \mathcal{X}'$ be a first order thickening
of algebraic stacks. Let $W$ be an affine scheme and let
$W \to \mathcal{X}$ be a smooth morphism. Then there exists
a cartesian diagram
$$
\xymatrix{
W \ar[d] \ar[r] & W' \ar[d] \\
\mathcal{X} \ar[r] & \mathcal{X}'
}
$$
with $W' \to \mathcal{X}'$ smooth.
\end{prop}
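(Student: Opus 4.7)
The plan is to reduce to the classical infinitesimal lifting theorem for smooth ring maps by working in a smooth chart of $\mathcal{X}'$, and then descend.

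First, choose a smooth surjection $U' \to \mathcal{X}'$ with $U' = \Spec B'$ affine. Its base change along $\mathcal{X} \hookrightarrow \mathcal{X}'$ is an affine first-order thickening $U = \Spec B \hookrightarrow U'$ with square-zero ideal $J := \ker(B' \to B)$. Form the fiber product $V := W \times_{\mathcal{X}} U$: it is an algebraic space, smooth over $W$ via one projection and smooth over $U$ via the other. Choose an affine cover $V_i = \Spec A_i$, so each $B \to A_i$ is smooth.

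Next, invoke the classical theorem that a smooth ring map $B \to A_i$ lifts to a smooth map $B' \to A_i'$ with $A_i' \otimes_{B'} B = A_i$. The obstruction lies in $\mathrm{Ext}^2_{A_i}(L_{A_i/B}, A_i \otimes_B J)$ and vanishes because $L_{A_i/B}$ is projective (by smoothness). Glue the pieces $\Spec A_i'$ into $V' \to U'$ smooth with $V' \times_{U'} U = V$; the gluing isomorphisms on overlaps also lift, since the corresponding obstructions sit in vanishing $\mathrm{Ext}^1$ groups.

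To produce $W'$ itself, descend $V'$ along the smooth cover $V \to W$. Concretely, $V'$ must be endowed with descent data over $U' \times_{\mathcal{X}'} U'$; the descent data on its reduction $U \times_{\mathcal{X}} U$ is already present because $V = W \times_{\mathcal{X}} U$, and lifting it through the thickening is again governed by $\mathrm{Ext}^1$ of a projective cotangent module, hence succeeds. Effective smooth descent then yields $W'$ affine with a smooth map $W' \to \mathcal{X}'$, and the diagram is cartesian by construction.

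The main obstacle is the descent step: although the infinitesimal lifting in each affine chart is automatic, the choice of lift $V'$ is only a torsor under a deformation module, and it takes some care to arrange all choices so that the pullbacks to the groupoid $U' \times_{\mathcal{X}'} U'$ admit a compatible cocycle-satisfying isomorphism. This reduces once more to a vanishing assertion for a first Ext group, which holds by the projectivity of the smooth cotangent complex together with the affineness of $W$.
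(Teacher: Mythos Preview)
The paper does not prove this proposition; it is quoted from the Stacks Project (Tag~0CKI), attributed to Emerton, and immediately applied. So there is no in-paper argument to compare against.

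Your sketch has the right ingredients, but the crucial step is not actually justified. The problem is exactly where you flag it: neither the gluing of the local lifts $\Spec A_i'$ into a global $V'$ over $U'$, nor the cocycle condition for the descent datum, is controlled by an $\mathrm{Ext}^1$ on an affine. The obstruction to gluing the $V_i'$ is a class in $H^2(V,\,T_{V/U}\otimes J)$, and $V=W\times_{\mathcal X}U$ is in general only a non-affine algebraic space; the descent cocycle lives on the still larger $V\times_W V\times_W V$. You invoke ``the affineness of $W$'' at the end, but never explain a mechanism by which it forces these cohomology groups on $V$ and its self-products over $W$ to vanish; as written this is an assertion, not an argument. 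The clean route (and the one taken in the cited reference) avoids charts altogether: work directly with the cotangent complex $L_{W/\mathcal X}$ of the morphism to the stack. By smoothness $L_{W/\mathcal X}\simeq\Omega_{W/\mathcal X}$ is locally free in degree~$0$, so the obstruction to a flat (hence smooth) lift $W'\to\mathcal X'$ sits in $\mathrm{Ext}^2_{\mathcal O_W}(\Omega_{W/\mathcal X},\,f^*\mathcal I)=H^2(W,\,\mathcal T_{W/\mathcal X}\otimes f^*\mathcal I)$, an $H^2$ of a quasi-coherent sheaf on the affine scheme $W$, which vanishes. That is where the affineness of $W$ actually bites; your chart-and-descend strategy pushes the obstruction onto spaces where affineness is no longer available.
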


Applying this result yields the following commutative diagrams
\[
  \xymatrix{
    \mathcal{W}_i \ar[d]^{f_i} \ar[r] & \mathcal{W}_{i+1} \ar[d]^{f_{i+1}} \\
    \mathcal{M}_{\pi^i} \ar[r] & \mathcal{M}_{\pi^{i+1}} }
\]
where \(\mathcal{W}_i=\Spec(A_i) \hookrightarrow
\mathcal{W}_{i+1}=\Spec(A_{i+1})\) is the thickening defined by \(\pi^i\). Now
\(\mathcal{A}'=\varprojlim A_i\) is a topologically finitely generated
\(\mathcal{O}\)-algebra, and \(\mathcal{A}=\mathcal{A}'/(\pi \text{-torsions})\)
is a topologically finitely presented \(\mathcal{O}\)-algebra
by~\cite[Proposition 1.1 (c)]{BL1}; denote its formal spectrum
\(\Spf(\mathcal{A})=\mathcal{W}\) and let \(W=\mathcal{W}^{\mathrm{rig}}\) be
its associated rigid analytic space which is an affinoid space. This is the
auxiliary space we want. By the description of functors, we have a system of
\(\mathcal{W}_i\)-flat coherent sheaves \(\mathcal{F}_i\) on
\(\mathcal{X}_{\pi^i} \times_{\mathcal{O}_{\pi^i}} \mathcal{W}_i\). Therefore we
get a \(\mathcal{W}\)-flat finitely presented sheaf \(\mathcal{F}=(\varprojlim
\mathcal{F}_i)/(\pi-torsions)\) (c.f.~\cite[Lemma 1.2 (c)]{BL1}) on
\(\mathcal{X} \times_{\mathcal{O}} \mathcal{W}\), and taking generic fiber gives
us a \(W\)-flat coherent sheaf \(F^{\mathrm{univ}}=\mathcal{F}^{\mathrm{rig}}\)
on \(X \times W\).

We can also define \(R_i=W_i \times_{\mathcal{M}_{\pi^i}} W_i\),
\(\mathcal{R}=(\varinjlim R_i)/(\pi \text{-torsions})\) and
\(R=\mathcal{R}^{\mathrm{rig}}\). Note that we will get an equivalence relation
\(R \rightrightarrows W\).

\begin{question}
  Can one make sense of ``\(W/R\)'' and prove it is the rigid stack of
  semistable coherent sheaves on \(X\) of Hilbert polynomial \(P\)?
\end{question}

% ** Determinant
\subsection{Determinant Construction}

In this subsection, we will explain the determinant construction which
associates a flat family of coherent sheaves on a smooth proper rigid variety to
a flat family of line bundles. This is well known to the experts and is written
down in~\cite{Knudsen-Mumford}. For reader's convenience we will briefly
introduce the construction in the following.

The following lemma is a disguise
of~\cite[\href{http://stacks.math.columbia.edu/tag/068x}{Tag
  068x}]{stacks-project}, and is the starting point of our determinant
construction.

\begin{lemm}
  Let \(X \to S\) be a smooth map of rigid spaces of relative dimension d, and
  let \(F\) be an \(S\)-flat coherent sheaf on \(X\). Then \(F\) is perfect as a
  complex of coherent sheaves on \(X\), i.e., we can find an admissible covering
  of \(X\) such that on each admissible open \(F\) can be resolved by locally
  free coherent sheaves. In fact the length of each resolution is at most \(d\).
\end{lemm}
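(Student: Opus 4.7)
The statement is local on $X$, so I would first reduce to the affinoid case: $S = \Sp(B)$ and $X = \Sp(A)$ with $A$ a smooth affinoid $B$-algebra of relative dimension $d$, and $F = \widetilde{M}$ for some finitely generated $A$-module $M$ which is flat over $B$. The target is to produce, after possibly refining to a finite admissible covering, a finite resolution of $M$ by finite projective (equivalently, after further refinement, finite free) $A$-modules of length at most $d$.

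Next I would check pointwise that $M$ has projective dimension at most $d$. Given any prime $\mathfrak{p} \subset A$ lying over $\mathfrak{q} = \mathfrak{p} \cap B$, the fiber ring $A \otimes_B k(\mathfrak{q})$ is smooth of relative dimension $d$ over $k(\mathfrak{q})$, hence regular of dimension at most $d$ at the image of $\mathfrak{p}$. Therefore $M \otimes_B k(\mathfrak{q})$, as a finite module over this regular local ring, admits a finite free resolution of length at most $d$, i.e.\ has projective dimension at most $d$ there. Now I would invoke the standard lifting principle for projective dimension under flat base change (the local criterion of flatness in the form: for $M$ a finite $A$-module flat over $B$, $\mathrm{pd}_{A_\mathfrak{p}}(M_\mathfrak{p}) = \mathrm{pd}_{(A/\mathfrak{m}_\mathfrak{q}A)_{\mathfrak{p}}}(M/\mathfrak{m}_\mathfrak{q}M)_{\mathfrak{p}}$, cf.\ Stacks Tag 051C), which upgrades the fiberwise bound to a bound on $\mathrm{pd}_{A_\mathfrak{p}}(M_\mathfrak{p})$. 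This is precisely the content of the tag (068x) the author cites.

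Having bounded projective dimension at every prime of the Noetherian affinoid $A$ by $d$, one deduces that $M$ has global Tor-dimension at most $d$. Then the standard iterative construction (pick a surjection from a finite free module, look at the kernel, iterate $d$ times, and observe that the $d$-th syzygy is projective because all higher Tors vanish) gives a resolution $0 \to P_d \to P_{d-1} \to \cdots \to P_0 \to M \to 0$ with each $P_i$ finite projective over $A$. Finite projective modules over an affinoid algebra become free after passing to a suitable admissible affinoid covering, so after one further refinement we obtain the asserted locally free resolution of length at most $d$, and in particular $F$ is perfect.

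The main obstacle, as I see it, is not the algebraic input — that is essentially packaged in the Stacks reference — but the translation step: confirming that smoothness of $X \to S$ in the rigid sense really does produce an affinoid algebra $A$ which is smooth in the commutative-algebra sense with regular fibers of dimension $d$, and that localization of $\widetilde{M}$ at primes of $A$ recovers the algebraic localization so that the pointwise arguments apply. Both facts are standard for Noetherian affinoid algebras, so this step amounts to bookkeeping rather than any genuine new difficulty.
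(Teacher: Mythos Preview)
Your proposal is correct and follows essentially the same route as the paper: reduce to the affinoid situation and invoke the algebraic input packaged in Stacks Tag 068X to bound the Tor-dimension by $d$, then build the resolution. The one genuine point of divergence is in the ``translation step'' you flag at the end. You assert that the fiber $A\otimes_B \kappa(\mathfrak q)$ is regular of dimension $\le d$ for \emph{every} prime $\mathfrak q$ of the base, i.e.\ that rigid smoothness upgrades to algebraic smoothness of the ring map; this is true (affinoid algebras are Jacobson and the smooth locus is open), but it is exactly the step that needs an argument. The paper avoids this entirely: it notes that condition (2) of Tag 068X is not verified as stated, and replaces it with the weaker hypothesis that the fibers over \emph{maximal} ideals of the base have global dimension $\le d$ (immediate from the rigid definition of smoothness). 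This suffices because it is enough to bound the local Tor-dimension of $M$ at maximal ideals of the top ring, and maps of affinoid algebras send maximal ideals to maximal ideals. So the paper's argument isolates the specific affinoid fact being used more sharply, while yours front-loads it into the smoothness claim; both are fine.
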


\begin{proof}
  % First of all, there exists an admissible open covering by affinoids
  % \(\{U_i=Sp(A_i)\}\) (resp. \(\{V_{ij}=Sp(B_{ij})\}\)) of \(S\) (resp. \(X\))
  % such that \(V_{ij}\) are mapped to \(U_i\) and \(F\) on \(V_{ij}\) is given by
  % a finitely generated \(B_{ij}\)-module \(M_{ij}\).

  We may reduce to the situation where \(X=\Sp(B) \to S=\Sp(A)\) is smooth in
  rigid sense and \(F\) is given by a finitely generated \(B\)-module \(M\). Now
  we meet every condition
  in~\cite[\href{http://stacks.math.columbia.edu/tag/068x}{Tag
    068x}]{stacks-project} except for (2), but we have a replacement: for every
  maximal ideal \(\mathfrak{n} \subset A\) the ring \(B \otimes_A
  \kappa(\mathfrak{n})\) has finite global dimension \(\leq d\). Because it
  suffices to check the local tor dimension of \(M\) is \(\leq n\) at maximal
  ideals \(\mathfrak{m} \subset B\) and every maximal ideal in \(\Spec(B)\) is
  mapped to a maximal ideal in \(\Spec(A)\), our replacement condition will make
  the original argument in the stacks project work.
\end{proof}

Let \(X \to S\) be a smooth proper map of rigid spaces of relative dimension
\(d\) where \(S\) is an affinoid and \(F\) an \(S\)-flat coherent sheaf on
\(X\). Then we can find an admissible covering \(\mathcal{U}=\{U_i=\Sp(A_i)\}\)
of \(X\) such that on each \(U_i\) we can find a projective resolution of \(F\)
of length at most \(d\): \[ K_i^\bullet \to F|_{U_i} \to 0.\] Then we define
\(\det(F)|_{U_i}=\bigotimes{\det(K_i^j)}^{\otimes {(-1)}^j}\), where by
\(\det(K)\) of a locally free sheaf \(K\) we mean its top rank self wedge
product. Now on the overlap \(U_{ij}=U_i \cap U_j\), we get two resolutions of
\(F|_{U_{ij}}\). So we get a quasi-isomorphism
\[\Phi_{ij}: K_i^\bullet|_{U_{ij}} \to K_j^\bullet|_{U_{ij}}
\]
which induces a canonical isomorphism
\[\phi_{ij}: (\det(F)|_{U_i})|_{U_{ij}} \to (\det(F)|_{U_j})|_{U_{ij}}.
\]
Moreover \(\phi_{ij}\) only depends on the homotopy class of \(\Phi_{ij}\)
(c.f~\cite[Theorem 1 and Proposition 2]{Knudsen-Mumford}). On triple
intersection \(U_i \cap U_j \cap U_k\) the composition of chosen maps between
resolutions \(\Phi_{ki} \circ \Phi_{jk} \circ \Phi_{ij}\) is homotopic to the
identity, hence the cocycle condition is satisfied automatically. Therefore the
\(\phi_{ij}\)'s give rise to gluing datum of \(\det(F)|_{U_i}\). We just need
the following two easy properties of this determinant construction.

\begin{prop}
\label{determinant-proposition}
  \leavevmode
  \begin{enumerate}
  \item Let \(L\) be a line bundle on \(X\). Then we have a canonical
    isomorphism
    \[L=\det(L).\]
  \item Let \(f: T \to S\) be an arbitrary morphism of rigid spaces. Then we have a
    canonical isomorphism
    \[\det(f^*F)=f^*(\det(F)).\]
  \end{enumerate}
\end{prop}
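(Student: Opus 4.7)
Both parts should follow directly from the local nature of the construction of $\det$ combined with the naturality of the top exterior power on locally free sheaves.

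For part (1), my plan is to use the tautological length-zero resolution $L \to L$, which is a globally defined finite locally free resolution of $L$ itself. Plugging into the definition gives $\det(L) = {\det(L)}^{\otimes {(-1)}^0}$, where on the right $\det(L)$ denotes the top exterior power of the rank one locally free sheaf $L$ and is therefore canonically $L$. Because the construction is proved independent of the choice of local resolutions up to the canonical gluing isomorphisms $\phi_{ij}$, and those isomorphisms depend only on homotopy classes, this global ``trivial'' choice produces a canonical identification $L \cong \det(L)$.

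For part (2), write $f' \colon X_T \to X$ for the base change of $f \colon T \to S$ along $X \to S$. Choose an admissible cover $\{U_i\}$ of $X$ together with finite locally free resolutions $K_i^\bullet \to F|_{U_i}$. The central observation is that $f'^* K_i^\bullet \to f'^* F|_{f'^{-1}(U_i)}$ is again a finite locally free resolution: each $K_i^j$ is locally free on $U_i$, which is smooth hence flat over $S$, so each $K_i^j$ is $S$-flat, and combined with $S$-flatness of $F$ this shows that the derived pullback of the resolution agrees with the ordinary pullback and remains exact. Combining this with the identity $\det(f'^* K) \cong f'^* \det(K)$ for locally free $K$ and the compatibility of $f'^*$ with tensor products and duals produces local isomorphisms $\det(f'^* F)|_{f'^{-1}(U_i)} \cong f'^*(\det(F)|_{U_i})$. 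These glue to a global isomorphism because $f'^*$ sends the defining quasi-isomorphisms $\Phi_{ij}$ to quasi-isomorphisms with the same homotopy class, so the gluing cocycle for $\det(F)$ pulls back to the gluing cocycle for $\det(f'^* F)$.

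The main technical point I expect is the flatness step, i.e.\ checking that $f'^* K_i^\bullet$ remains a resolution after base change; once that is in hand, everything else is a formal exercise in the naturality of top exterior powers and of tensor products on locally free sheaves.
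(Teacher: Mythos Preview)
Your argument is correct and is the standard one; the paper itself does not supply a proof of this proposition, treating it as a routine consequence of the Knudsen--Mumford determinant formalism it cites. Your part~(1) via the length-zero resolution and part~(2) via the observation that a bounded resolution by $S$-flat sheaves of an $S$-flat sheaf remains exact after arbitrary base change are exactly what one would write to fill in the omitted details, and nothing further is needed.
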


% ** Proof
\subsection{Proof of the Main Theorem}

Before proving our Main Theorem, let us fix the notations. Let \(K'\) be a
finite extension of \(K\). Denote by \(\{\}'\) the base change of corresponding
objects from \(K\) (resp.\ \(\mathcal{O}\)) to \(K'\) (resp.\
\(\mathcal{O}'=\mathcal{O}_{K'}\)). Recall that in Subsection~\ref{chosen}, 
right after Proposition~\ref{proposition-affine-smooth-lift-to-first-order}, we have introduced \(W\) and \(\mathcal{F}^{\mathrm{univ}}\).

\begin{proof}[Proof of the Main Theorem~\ref{Main Theorem}]
  It suffices to prove that \(\underline{\Pic}^{P}_{X/K}\) is a bounded functor
  by Theorem~\ref{partially proper}. Consider \(F^{\mathrm{univ}}\) on \(X
  \times W\) which is a \(W\)-flat coherent sheaf. The determinant construction
  gives us a \(W\)-flat line bundle on \(X \times W\), hence a map \(\det: W \to
  \underline{\Pic}^{P}_{X/K}\). We claim this map is a surjection on classical
  Tate points. This means that for every \(T \in
  \underline{\Pic}^{P}_{X/K}(K')\) we can find a preimage of \(T\) in
  \(W(\bar{K'})\), where \(K'\) is a finite extension of \(K\) with
  \(\mathcal{O}' \cap K=\mathcal{O}\).

  Indeed, \(T\) corresponds to a line bundle \(L\) on \(X_{K'}\) with Hilbert
  polynomial the same as that of \(\mathcal{O}_{X}\). Then by
  Theorem~\ref{semistable reduction} we see that \(L\) has a formal model
  \(\mathcal{F}'\) on \(\mathcal{X}_{\mathcal{O}'}\) where \(\mathcal{F}'_0\) is
  semistable. Note that \(\mathcal{F}'_0\) automatically has Hilbert polynomial
  the same as that of \(\mathcal{O}_{\mathcal{X}_0}\) by
  Theorem~\ref{well-defined}. \(\mathcal{F}'\) gives rise to the maps \(s_i\)
  from bottom row to middle row in the diagram~\ref{commutative-diagram} below,
  where the subscript \(i\) here means the \(\pi^{i+1}\)-level of corresponding
  objects with \(\pi\) chosen as in Section~\ref{chosen}. As the map
  \(\mathcal{W}'_0 \to \mathcal{M}'_0\) is surjective and of finite
  presentation, after a further (unramified) finite extension of \(K'\) (for
  simplicity we will still call it \(K'\) below) we can lift the map \(s_0\) to
  \(\sigma_0: \Spec(\mathcal{O}'/(\pi)) \to \mathcal{W}'_0\). By smoothness of
  \(f_i\) we can thus lift the maps \(s_i\) to \(\sigma_i:
  \Spec(\mathcal{O}'/(\pi'^{i+1})) \to \mathcal{W}'_i\).
  \begin{equation}
   \label{commutative-diagram} 
   \xymatrix{
     \mathcal{W}'_0 \ar[d]^{f_0} \ar@{^{(}->}[r] & \mathcal{W}'_1 \ar[d]^{f_1} \ar@{^{(}->}[r] & \cdots \\
     \mathcal{M}'_0 \ar[d] \ar@{^{(}->}[r] & \mathcal{M}'_1 \ar[d] \ar@{^{(}->}[r] & \cdots \\
     \Spec(\mathcal{O}'/(\pi)) \ar@/^/[u]^{s_0} \ar@{-->}@/^2pc/[uu]^{\sigma_0} \ar@{^{(}->}[r] &
     \Spec(\mathcal{O}'/(\pi^2)) \ar@/^/[u]^{s_1} \ar@{-->}@/^2pc/[uu]^{\sigma_1} \ar@{^{(}->}[r] & \cdots
   }
  \end{equation}
  Therefore we get a map \(\sigma: \mathcal{O}' \to \mathcal{W}\), taking
  generic fiber gives us a \(K'\)-point \(Q \in W(K')\). By
  Proposition~\ref{determinant-proposition} we see that \(\det(Q)=T\). This
  completes our proof.
\end{proof}

\begin{remark}
  Our method actually proves \(W \to \underline{\Pic}^{P}_{X/K}\) is a
  surjection on analytic (i.e.~rank \(1\)) points.
\end{remark}

\begin{proof}[Proof of Theorem~\ref{proper picard}]
  Since \(\Pic^0_{X/K}\) is a connected component of
  \(\Pic^P_{X/K}\), it suffices to show the latter space is proper.
  This follows from our Main Theorem~\ref{Main Theorem} and
  Proposition~\ref{justify}.
\end{proof}

\section*{Acknowledgements}

The author wants to express his deep gratitude to his advisor Johan de Jong for
introducing him to the study of rigid geometry, suggesting a thesis problem
which motivates this paper, and providing detailed feedback on an earlier draft.
The author also wants to thank David Hansen for many helpful discussions, in
addition to pointing out the reference~\cite[Theorem 2.3.3]{K-L} and constantly
urging the author to write this paper. He is thankful to Evan Warner for
clarifying some details of the representability of the Picard functor. He thanks
his friends Pak-Hin Lee and Dingxin Zhang as well, for LaTeX and grammar
checking.

\end{document}